\documentclass[11pt,reqno]{amsart}

\usepackage{amsfonts}
\usepackage{amsmath}
\usepackage{amssymb, esint}
\usepackage{amsthm,color}
\usepackage{enumerate}
\usepackage{mathtools}
\usepackage{enumitem}
\usepackage{url}
\usepackage[hidelinks, bookmarksdepth=3]{hyperref}
\usepackage{xcolor}
\usepackage{cancel}
\usepackage{ulem}
\usepackage{caption,subcaption}
\usepackage{tikz}
\usepackage{tikz-3dplot}
\usepackage[T1]{fontenc}
\usepackage{mathrsfs}
\usepackage{commath}

\usepackage{longtable}

\usepackage{soul}
\usepackage[
style=alphabetic,
backend=biber,
maxbibnames=99,
minbibnames=99,
maxcitenames=99,
mincitenames=99
]{biblatex}
\AtBeginDocument{
  \setlength{\abovedisplayskip}{9pt plus 2pt minus 3pt}
  \setlength{\belowdisplayskip}{9pt plus 2pt minus 3pt}
  \setlength{\abovedisplayshortskip}{6pt plus 2pt minus 2pt}
  \setlength{\belowdisplayshortskip}{6pt plus 2pt minus 2pt}
  \setlength{\jot}{4pt}
}

\DeclareFieldFormat[article]{journaltitle}{#1}
\DeclareFieldFormat[book]{title}{#1}

\tdplotsetmaincoords{70}{110}

\tdplotsetmaincoords{70}{110}

\usepackage[a4paper,margin=2.8cm]{geometry}

\def\0{{\mathbf 0}}

\theoremstyle{plain}
\newtheorem{thm}{Theorem}[section]

\newtheorem{prop}[thm]{Proposition}

\newcommand{\thistheoremnames}{}
\newtheorem*{genericthms}{\thistheoremnames}
\newenvironment{para*}[1]
  {\renewcommand{\thistheoremnames}{#1}%
   \begin{genericthms}}
  {\end{genericthms}}

\theoremstyle{remark}

\newtheorem*{claim*}{Claim}

\newtheorem{rem}[thm]{Remark}

\begin{document}

\numberwithin{equation}{section}

\title[A strengthening of Chang's lemma]{A strengthening of Chang's lemma}

\author[G. Carenini]{Gaia Carenini}
\address{Trinity College Cambridge, Department of Pure Mathematics and Mathematical Statistics, Centre for Mathematical Sciences, Wilberforce Road, Cambridge CB3 0WA, United Kingdom.}
\email{gc645@cam.ac.uk}

\author[L. Franchi]{Leonardo Franchi}
\address{Department of Pure Mathematics and Mathematical Statistics, Centre for Mathematical Sciences, Wilberforce Road, Cambridge CB3 0WA, United Kingdom}
\email{lf511@cam.ac.uk}

\begin{abstract}
We prove a strengthening of Chang's lemma for subsets of $\mathbb F_p^n$. The
classical conclusion that the large spectrum is contained in a subspace of
dimension at most $2\varepsilon^{-2}\log(1/\alpha)$ is refined to show that
every character outside this subspace has small correlation with the set not
only globally, but also on average over the cosets of the orthogonal complement,
in a natural cosetwise $\ell^1$ norm. As a consequence, we obtain a localized
counting lemma. We also give an extension of the argument to
arbitrary finite abelian groups.
\end{abstract}
\maketitle
\vspace{-0.8cm}
\tableofcontents
\section{Introduction}
A cornerstone of additive combinatorics, Chang’s lemma \cite{Chang2002} asserts that the large Fourier spectrum of a sparse set is highly structured, being contained in a low-dimensional subspace. Initially motivated by applications to improving Freiman’s theorem on sumsets, this lemma has subsequently found numerous applications across several areas of mathematics.  Most notably, it has been extensively used within additive combinatorics and combinatorial number theory. For instance, it plays a key role in establishing structural results about arithmetic progressions in sumsets \cite{green2008quantitative}, in the analysis of Boolean functions with small spectral norm \cite{green2008boolean}, in obtaining improved bounds for Roth’s theorem on three-term arithmetic progressions in the integers \cite{sanders2011roth, bloom2016quantitative, bloom2020breaking}, and in strengthening the upper bound for the size of sets with no square difference \cite{bloom2022new}. It is also closely connected to Fourier-analytic approaches to additive problems such as Littlewood–Offord-type inequalities \cite{sanders2007littlewood}. Beyond additive combinatorics, Chang’s lemma has had significant impact in other areas. It plays a central role in combinatorial group theory, particularly in the study of growth in groups and approximate subgroups \cite{Breuillard2012, helfgott2015growth}. Furthermore, it has found important applications in theoretical computer science and learning theory, including Fourier-based learning algorithms and structural approximation results \cite{tsang2013fourier, chan2016approximate, lee2019fourier, asadi2022worst}.

As is common in additive combinatorics, we first work in the setting of vector spaces over a fixed finite field. This setting contains the main ideas while keeping the notation transparent. The corresponding statement for arbitrary finite abelian groups
requires a slightly different formulation and is postponed to Section 4.

Let $p$ be a prime. For any two elements $\xi,x$ of $\mathbb{F}_p^n$, we define their inner product as $\langle \xi,x\rangle = \sum_{i=1}^n \xi_i x_i \in \mathbb{F}_p$. To each element $\xi\in\mathbb{F}_p^n$, we associate the character $\chi_\xi:\mathbb{F}_p^n\to\mathbb{C}$ given by $\chi_\xi(x)=\omega^{\langle \xi,x\rangle}$, where $\omega=e^{2\pi i/p}$. The family $\{\chi_\xi:\xi\in\mathbb{F}_p^n\}$ forms an orthonormal basis of $L^2(\mathbb{F}_p^n)$ with respect to the inner product $$\langle f,h\rangle=\mathop{\mathbb E}\limits_{x\in\mathbb{F}_p^n}f(x)\overline{h(x)},$$ where  $\mathop{\mathbb E}\limits_{x\in\mathbb{F}_p^n}$ denotes the expectation with respect to the uniform measure on $\mathbb{F}_p^n$.

For every $\xi\in\mathbb{F}_p^n$, we define the Fourier transform of a function $f:\mathbb{F}_p^n\to\mathbb{C}$ at $\xi$ by
$$
\widehat{f}(\xi)=\mathop{\mathbb E}\limits_{x\in\mathbb{F}_p^n} f(x)\overline{\chi_\xi(x)}.
$$

For every set $A\subseteq \mathbb{F}_p^n$, we denote by $\mathbf{1}_A$ its indicator function.
Given a set $A\subseteq \mathbb{F}_p^n$ of density $\alpha$ and a real parameter $\varepsilon>0$, we define the large spectrum of $A$ by
$$
\operatorname{Spec}_\varepsilon(A):=
\left\{
\xi\in \mathbb{F}_p^n:
\left|\widehat{\mathbf{1}_A}(\xi)\right|> \varepsilon\alpha
\right\}.
$$
With this notation in place, we can now state Chang's lemma in the setting of $\mathbb{F}_p^n$.

\begin{thm}[Chang's lemma]\label{theo:Chang-original}
Let $A\subseteq \mathbb{F}_p^n$ be a set of density $\alpha$, and let $\varepsilon\in(0,1)$.
Then there exists a subspace $V\leq \mathbb{F}_p^n$ with
\[
\dim(V)\leq \frac{2}{\varepsilon^2}\log\frac{1}{\alpha}
\]
such that $\operatorname{Spec}_\varepsilon(A)\subseteq V$.
\end{thm}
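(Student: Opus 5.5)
We prove Theorem~\ref{theo:Chang-original} by the standard entropy (Rudin-type) argument, applied to a maximal dissociated subset of the large spectrum. Call a set $\Lambda=\{\xi_1,\dots,\xi_d\}\subseteq\mathbb{F}_p^n$ \emph{dissociated} if no nontrivial combination $\sum_j c_j\xi_j$ with $c_j\in\{-1,0,1\}$ vanishes. Choose $\Lambda\subseteq\operatorname{Spec}_\varepsilon(A)$ to be a maximal dissociated subset and put $V=\operatorname{span}(\Lambda)$. If some $\xi\in\operatorname{Spec}_\varepsilon(A)$ lay outside $V$, then $\Lambda\cup\{\xi\}$ would still be dissociated, since $\xi\notin V$ forces the coefficient of $\xi$ in any relation to be $0$. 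This contradicts maximality, so $\operatorname{Spec}_\varepsilon(A)\subseteq V$ and $\dim V\le d=|\Lambda|$. It therefore suffices to show $d\le 2\varepsilon^{-2}\log(1/\alpha)$.

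\textbf{Step 1: duality.} Write $\widehat{\mathbf 1_A}(\xi_j)=|\widehat{\mathbf 1_A}(\xi_j)|e^{i\theta_j}$ and set $g(x)=\sum_j \operatorname{Re}\big(e^{-i\theta_j}\chi_{\xi_j}(x)\big)$. Then
\[
\mathbb{E}_x \mathbf 1_A(x)\, g(x)=\operatorname{Re}\sum_j e^{-i\theta_j}\,\overline{\widehat{\mathbf 1_A}(\xi_j)}\ \text{(up to conjugation conventions)}=\sum_j|\widehat{\mathbf 1_A}(\xi_j)|> d\varepsilon\alpha.
\]

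\textbf{Step 2: exponential moment (Rudin's inequality).} For real $t>0$, we aim to show $\mathbb{E}_x e^{t g(x)}\le e^{t^2 d/2}$. Write $e^{tg}=\prod_j e^{t\cos(\phi_j(x))}$ and use the elementary bound $e^{t\cos\phi}\le \cosh t+\sinh t\cos\phi$. Expanding the product gives $\prod_j\big(\cosh t+\sinh t\,\operatorname{Re}(e^{-i\theta_j}\chi_{\xi_j})\big)$, and expanding further gives a sum of terms, each a constant multiple of a character $\chi_{\sum c_j\xi_j}$ with $c_j\in\{-1,0,1\}$. By dissociativity, only the empty combination has nonzero mean, so the expectation equals $(\cosh t)^d\le e^{t^2d/2}$. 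We expect this step to be the main obstacle. For $p=2$ the characters are real and the computation is clean. For odd $p$ one must check carefully that the complex expansion only produces $\pm$ combinations, which is exactly what the dissociativity definition covers.

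\textbf{Step 3: entropy bound.} By Jensen applied to the uniform measure on $A$,
\[
\exp\big(t\,\mathbb{E}_{x\in A}g\big)\le \mathbb{E}_{x\in A}e^{tg}\le \alpha^{-1}\mathbb{E}_x e^{tg}\le \alpha^{-1}e^{t^2d/2}.
\]
Since $\mathbb{E}_{x\in A}g>d\varepsilon$, taking logarithms gives $td\varepsilon< \log(1/\alpha)+t^2d/2$. Choosing $t=\varepsilon$ yields $d\varepsilon^2/2<\log(1/\alpha)$, that is, $d<2\varepsilon^{-2}\log(1/\alpha)$, as required.
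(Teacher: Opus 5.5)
Your proof is correct, but it is the classical Rudin-inequality argument rather than the paper's.

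\textbf{How the paper argues.} The paper never fixes the frequencies in advance. It derives Theorem~\ref{theo:Chang-original} from Theorem~\ref{theo:main}, and reproves it directly for finite abelian groups in Theorem~\ref{Chang-general}. Both proofs use a greedy iteration: adjoin one witness $\xi\notin V_i$ at a time, multiply the current measure by $1+\varepsilon d$, and track $D(\mu_A\|\nu_i)$, which drops by at least $\varepsilon^2/2$ per step. The key property of the new character is that it has mean zero on every coset of $V_i^\perp$, on which the current measure is constant.

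\textbf{How the two proofs relate.} They are dual forms of one computation. Write $u_j=\cos\phi_j$ for the $j$-th summand of $g$ and take $\tanh t=\varepsilon$. Then your majorant $\prod_j(\cosh t+u_j\sinh t)=(\cosh t)^d\prod_j(1+\varepsilon u_j)$ is, up to the factor $(\cosh t)^d$, the density of the paper's final measure $\nu_T$ in Theorem~\ref{Chang-general}, when that iteration is run on your $\xi_1,\dots,\xi_d$ in order. Your upper chord for $e^{tu}$ is literally the paper's lower chord for $\log(1+\varepsilon u)$. If you keep $(\cosh t)^d$ rather than $e^{t^2d/2}$, you get $d\bigl(\tfrac{1+\varepsilon}{2}\log(1+\varepsilon)+\tfrac{1-\varepsilon}{2}\log(1-\varepsilon)\bigr)<\log(1/\alpha)$, which is exactly the paper's intermediate bound.

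\textbf{What each approach buys.} Your global version gives the familiar formulation: every dissociated subset of $\operatorname{Spec}_\varepsilon(A)$ has fewer than $2\varepsilon^{-2}\log(1/\alpha)$ elements, and $V$ is spanned by spectral frequencies. Dissociativity kills all cross terms at once. The paper's step-by-step version buys flexibility. Since only fibrewise mean zero is needed, the phase may depend on the coset ($\phi(y)$), and this is exactly what yields the cosetwise $\ell^1$ bound of Theorem~\ref{theo:main}. With fixed global phases $\theta_j$, your argument controls only $|\widehat{\mathbf 1_A}(\xi)|$. Upgrading it would require three changes: ordering the frequencies, twisting each character by coset-dependent phases, and replacing the dissociativity expansion by iterated conditioning on fibres. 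That is essentially the paper's iteration.

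\textbf{Two small points.} First, with the convention $\widehat{\mathbf 1_A}(\xi)=\mathbb E_x\mathbf 1_A(x)\overline{\chi_\xi(x)}$ you need $u_j=\Re\bigl(e^{i\theta_j}\chi_{\xi_j}\bigr)$. Your choice $e^{-i\theta_j}$ gives $\mathbb E_x\mathbf 1_A u_j=|\widehat{\mathbf 1_A}(\xi_j)|\cos 2\theta_j$ instead. Second, Step~2 is not actually an obstacle for any $p$. Since $u_j=\tfrac12\bigl(e^{i\theta_j}\chi_{\xi_j}+e^{-i\theta_j}\chi_{-\xi_j}\bigr)$, every monomial is a multiple of $\chi_{\sum_j c_j\xi_j}$ with $c\in\{-1,0,1\}^d$; for $p=2$ the two summands coincide, which is harmless. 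The factors $\cosh t+u_j\sinh t$ are non-negative for $t\ge 0$, so the pointwise bounds may be multiplied.
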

To put this in perspective, Parseval's identity yields the cardinality bound
$
|\operatorname{Spec}_\varepsilon(A)| \le \varepsilon^{-2}\alpha^{-1}.
$
This shows that the large spectrum cannot be too large, but it does not rule out the possibility that the large Fourier
coefficients are spread in an unstructured way across the dual group. Chang's
lemma gives a stronger structural conclusion, replacing this cardinality bound with the
statement that the large spectrum is contained in a subspace of dimension at
most $2\varepsilon^{-2}\log(1/\alpha)$.

\subsection{Strengthened Chang's lemma}
It is natural to ask whether Theorem~\ref{theo:Chang-original} can be sharpened. Broadly speaking, there are two directions for improvement: one could seek better quantitative bounds, or one could try to strengthen the structural content of its conclusion. The former direction is essentially ruled out by a result of Green \cite{Green2003}, who showed that Chang’s lemma is quantitatively optimal in general, establishing that the dimension bound is sharp up to constant factors. Indeed, in some regimes, this is already witnessed over $\mathbb{F}_2^n$ by Hamming balls. Consequently, the only viable option is the second, and this is the direction we explore in this work. 

Before stating our result, we present an alternative formulation of Chang’s lemma that will make the comparison with our result more transparent. Given a subspace $V\le \mathbb{F}_p^n$, write
$$
V^\perp:=\{x\in\mathbb{F}_p^n:\langle x,\xi\rangle=0 \text{ for every }\xi\in V\},
$$
and let
$
\pi_V:\mathbb{F}_p^n\to \mathbb{F}_p^n/V^\perp
$
denote the quotient map. For each $y\in \mathbb F_p^n/V^\perp$, we identify $y$
with the corresponding coset $y+V^\perp$ of $V^\perp$ in $\mathbb F_p^n$. 
\begin{thm}[Chang's lemma, coset form]
\label{theo:chang-reformulated}
Let $A\subseteq \mathbb{F}_p^n$ be a set of density $\alpha$, and let $\varepsilon\in(0,1)$.
Then there exists a subspace $V\leq \mathbb{F}_p^n$ with
$$
\dim(V)\leq \frac{2}{\varepsilon^2}\log\frac{1}{\alpha}
$$
such that
\begin{equation}\label{eq:chang-reformulated}
\left|
\mathop{\mathbb E}\limits_{y\in \mathbb F_p^n/V^\perp}
\mathop{\mathbb E}\limits_{x\in y+V^\perp}
\mathbf{1}_A(x)\,\overline{\chi_\xi(x)}
\right|
\le \varepsilon\alpha
\end{equation}
for every $\xi\notin V$.
\end{thm}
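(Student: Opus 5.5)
The plan is to derive Theorem~\ref{theo:chang-reformulated} directly from Theorem~\ref{theo:Chang-original}, by showing that for every subspace $V$ the left-hand side of \eqref{eq:chang-reformulated} is exactly $|\widehat{\mathbf 1_A}(\xi)|$. Once that identity is in place, the two statements coincide for the same subspace $V$, and the coset form is simply a rewriting.

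First, I will take the subspace $V$ supplied by Theorem~\ref{theo:Chang-original}, so that $\dim V\le 2\varepsilon^{-2}\log(1/\alpha)$ and $\operatorname{Spec}_\varepsilon(A)\subseteq V$. By the definition of the large spectrum, every $\xi\notin V$ then satisfies $|\widehat{\mathbf 1_A}(\xi)|\le\varepsilon\alpha$.

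Second, I will verify the averaging identity. The cosets $y+V^\perp$, as $y$ ranges over $\mathbb F_p^n/V^\perp$, partition $\mathbb F_p^n$ into pieces of equal size $|V^\perp|$. Hence
\[
\mathop{\mathbb E}\limits_{y\in \mathbb F_p^n/V^\perp}\mathop{\mathbb E}\limits_{x\in y+V^\perp} g(x)=\mathop{\mathbb E}\limits_{x\in\mathbb F_p^n} g(x)
\]
for any function $g$. This is the law of total expectation for the uniform measure. Applying it to $g=\mathbf 1_A\overline{\chi_\xi}$ shows that the inner quantity in \eqref{eq:chang-reformulated} equals $\widehat{\mathbf 1_A}(\xi)$. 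Combining this with the first step gives the bound $\varepsilon\alpha$ for every $\xi\notin V$.

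There is no genuine obstacle here; the only point to state carefully is that all cosets have the same cardinality, which is what makes the iterated average equal the global one. This remark also explains the purpose of the coset form: the absolute value sits outside the average over $y$. The strengthening announced in the abstract will presumably move the absolute value inside that average, which gives a cosetwise $\ell^1$ norm. That is a genuinely stronger statement and will require a new argument.
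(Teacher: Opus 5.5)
Your proposal is correct and is essentially the paper's argument. The paper likewise takes the subspace $V$ from the classical Chang's lemma and observes that the iterated average in \eqref{eq:chang-reformulated} is exactly $\widehat{\mathbf 1_A}(\xi)$, because the cosets of $V^\perp$ all have the same size, so the bound $\varepsilon\alpha$ for $\xi\notin V$ is just $\operatorname{Spec}_\varepsilon(A)\subseteq V$ restated.
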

The equivalence with the classical Chang's lemma (Theorem~\ref{theo:Chang-original}) holds since the left-hand side of equation \eqref{eq:chang-reformulated} is exactly  $|\widehat{\mathbf 1_A}(\xi)|$. Thus, every Fourier coefficient outside $V$ has a magnitude of at most $\varepsilon\alpha$, and hence $\operatorname{Spec}_\varepsilon(A)\subseteq V$. Our main result is the following strengthening of Theorem~\ref{theo:chang-reformulated}.

\begin{thm}[Strengthened Chang's lemma]\label{theo:main}
Let $A\subseteq \mathbb{F}_p^n$ be a set of density $\alpha$, and let $\varepsilon\in(0,1)$.
Then there exists a subspace $V\le \mathbb{F}_p^n$ with
$$
\dim(V)\le \frac{2}{\varepsilon^2}\log\frac{1}{\alpha}
$$
such that
\begin{equation}\label{eq:refined-chang}
\mathop{\mathbb E}\limits_{y\in \mathbb F_p^n/V^\perp}
\left|
\mathop{\mathbb E}\limits_{x\in y+V^\perp}
\mathbf{1}_A(x)\,\overline{\chi_\xi(x)}
\right|
\le \varepsilon\alpha
\end{equation}
for every $\xi\notin V$.
\end{thm}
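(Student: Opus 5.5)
The plan is a density (entropy) increment on the quotient. We build $V$ greedily, starting from $V_0=\{0\}$. As long as some $\xi\notin V_k$ violates \eqref{eq:refined-chang}, we set $V_{k+1}=V_k+\langle\xi\rangle$, so the dimension goes up by exactly one at each step. To bound the number of steps we use the potential
\[
D(V):=\mathrm{KL}\bigl(\pi_V\mu_A\,\big\|\,\mathrm{unif}(\mathbb F_p^n/V^\perp)\bigr)=\mathop{\mathbb E}\limits_{y}\frac{\alpha_y}{\alpha}\log\frac{\alpha_y}{\alpha},
\]
with natural logarithms. Here $\mu_A=\mathbf 1_A/|A|$ is the uniform measure on $A$, and $\alpha_y$ is the density of $A$ in the coset $y+V^\perp$. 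Since $\pi_V\mu_A$ is the image of $\mu_A$ under $\pi_V$, the data-processing inequality gives $0\le D(V)\le \mathrm{KL}(\mu_A\|\mathrm{unif})=\log(1/\alpha)$. It therefore suffices to show that each step increases $D$ by more than $\varepsilon^2/2$. The process then stops after at most $2\varepsilon^{-2}\log(1/\alpha)$ steps, and the final $V$ satisfies \eqref{eq:refined-chang} for every $\xi\notin V$.

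\emph{Chain rule.} Let $V'=V+\langle\xi\rangle$ with $\xi\notin V$. Then $V'^\perp=V^\perp\cap\ker\langle\xi,\cdot\rangle$, so each coset $y+V^\perp$ splits into $p$ cosets of $V'^\perp$, indexed by the value $j\in\mathbb F_p$ of $\langle\xi,x\rangle$. Let $q^{(y)}$ be the conditional distribution of $\mu_A$ restricted to $y+V^\perp$ over these $p$ pieces. The chain rule for relative entropy gives
\[
D(V')-D(V)=\mathop{\mathbb E}\limits_{y}\frac{\alpha_y}{\alpha}\,\mathrm{KL}\bigl(q^{(y)}\,\|\,\mathrm{unif}_p\bigr).
\]
Write $I_y:=\mathop{\mathbb E}_{x\in y+V^\perp}\mathbf 1_A(x)\overline{\chi_\xi(x)}$. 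Since $\chi_\xi(x)=\omega^{j}$ on the $j$-th piece, we have $I_y=\alpha_y\sum_j q^{(y)}_j\omega^{-j}$.

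\emph{Pinsker step.} Since $\sum_j\omega^{-j}=0$,
\[
\Bigl|\sum_j q_j\omega^{-j}\Bigr|=\Bigl|\sum_j (q_j-\tfrac1p)\omega^{-j}\Bigr|\le\|q-\mathrm{unif}_p\|_1 .
\]
Pinsker's inequality $\mathrm{KL}\ge\tfrac12\|q-\mathrm{unif}_p\|_1^2$ then gives $\mathrm{KL}(q^{(y)}\|\mathrm{unif}_p)\ge |I_y|^2/(2\alpha_y^2)$. Substituting into the chain rule,
\[
D(V')-D(V)\ge\frac{1}{2\alpha}\mathop{\mathbb E}\limits_y\frac{|I_y|^2}{\alpha_y},
\]
where cosets with $\alpha_y=0$ have $I_y=0$ and are simply omitted. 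By Cauchy--Schwarz and $\mathop{\mathbb E}_y\alpha_y=\alpha$,
\[
\Bigl(\mathop{\mathbb E}\limits_y|I_y|\Bigr)^2\le\mathop{\mathbb E}\limits_y\frac{|I_y|^2}{\alpha_y}\cdot\mathop{\mathbb E}\limits_y\alpha_y=\alpha\,\mathop{\mathbb E}\limits_y\frac{|I_y|^2}{\alpha_y},
\]
and therefore
\[
D(V')-D(V)\ge\frac{1}{2\alpha^2}\Bigl(\mathop{\mathbb E}\limits_y|I_y|\Bigr)^2>\frac{\varepsilon^2}{2}
\]
whenever \eqref{eq:refined-chang} fails for $\xi$. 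This is exactly the required increment.

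The main point needing care is the Pinsker step: the constant must come out as $1/2$ in order to recover the $2\varepsilon^{-2}$ bound. It is also important that the increment is controlled by the cosetwise $\ell^1$ quantity rather than by the global Fourier coefficient. The weighted Cauchy--Schwarz with weights $\alpha_y$ is what turns the cosetwise $\ell^2$ gain from the chain rule into the cosetwise $\ell^1$ norm appearing in \eqref{eq:refined-chang}. The remaining steps are routine: the data-processing bound for $D$ and the observation that $|I_y|$ is unchanged when $\xi$ is shifted by an element of $V$.
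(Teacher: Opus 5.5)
Your proof is correct, but it takes a different route from the paper's proof of this theorem.

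\textbf{The paper's route.} The paper runs a multiplicative-weights (boosting) scheme. It keeps an auxiliary probability measure $\nu_i$, constant on cosets of $V_i^\perp$, and updates it by $\nu_{i+1}=(1+\varepsilon d)\nu_i$ with the phase-aligned test function $d(x)=\Re\bigl(\phi(y)\overline{\chi_\xi(x)}\bigr)$. The potential $\Psi_i=D(\mu_A\|\nu_i)$ starts at $\log(1/\alpha)$ and drops by at least $\varepsilon^2/2$ per step. The decrease uses only the chord bound for the concave map $t\mapsto\log(1+\varepsilon t)$ together with $h(\varepsilon)\ge\varepsilon^2/2$. Because $d$ already encodes the cosetwise phases, $\mathbb E_{x\sim\mu_A}d(x)$ is exactly the cosetwise $\ell^1$ quantity, so no Pinsker or Cauchy--Schwarz step is needed.

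\textbf{Your route.} Your potential is the intrinsic version of theirs. Write $\tilde\mu_V$ for $\mu_A$ averaged over the cosets of $V^\perp$. For any coset-constant $\nu$ one has $D(\mu_A\|\nu)=D(\mu_A\|\tilde\mu_V)+D(\tilde\mu_V\|\nu)$, which gives $\Psi_i\ge\log(1/\alpha)-D(V_i)$. So you are in effect always using the best coset-constant hypothesis measure.

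\textbf{What each approach buys.} Your approach gives a potential that depends only on $V$ and is monotone under refinement. Your cosetwise Pinsker bound followed by Jensen (with weights $\alpha_y/\alpha$) actually yields
$D(V')-D(V)\ge\frac12\bigl(\mathbb E_x|h_{V'}(x)-h_V(x)|\bigr)^2$ with $h=\alpha^{-1}\mathbf 1_A$. This proves the generalization stated in the Remark of Section~2 directly.

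The paper's multiplicative update is what transfers to Chang's lemma in arbitrary finite abelian groups (Theorem~\ref{Chang-general}). There is no quotient to project onto in that setting; instead one maintains a nonnegative density with Fourier support in $\langle\Lambda_i\rangle$. For the refined general-group statement (Theorem~\ref{thm:refined-chang-general}), the paper switches to essentially your scheme. Its increasing potential $\Phi_i=D(p_i\|u_i)$ is bounded by the log-sum inequality, and a Jensen step plays the role of your Pinsker-plus-weighted-Cauchy--Schwarz.
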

Roughly speaking, Theorem~\ref{theo:main} shows that the ``smallness'' of the Fourier coefficients outside $V$ is not merely a consequence of global cancellation, but is already witnessed by substantial cancellation on many fibers. 

A closer inspection of inequality \eqref{eq:refined-chang} for a fixed frequency $\xi$ shows that the conclusion of Theorem~\ref{theo:main} becomes stronger as the dimension of $V$ increases: indeed, larger subspaces correspond to smaller cosets of $V^\perp$, and therefore leave less room for cancellation within each coset. In particular, any argument showing that condition \eqref{eq:refined-chang} fails for sufficiently large values of $\dim(V)$ immediately yields an improvement on the classical dimension bound in Chang's lemma.

\subsection{Localized counting}
To illustrate the kind of benefit that Theorem \ref{theo:main} offers, we present a new counting result. While the same argument applies to counts associated with any fixed linear equation over $\mathbb F_p^n$, we state the result for additive energy, which is the simplest example.
For a function $f:\mathbb F_p^n\to\mathbb C$, we define the following quantity
$$
\Lambda_4(f)
=
\mathop{\mathbb E}\limits_{x_1+x_2=x_3+x_4}
f(x_1)f(x_2)\overline{f(x_3)f(x_4)}.
$$
When $f=\mathbf 1_A$ for a set $A\subseteq \mathbb F_p^n$, this is, up to a constant factor, the additive energy of $A$
$$
\Lambda_4(\mathbf 1_A)
=
\frac{|\{(x,y,z,w)\in A^4:x+y=z+w\}|}{|\mathbb F_p^n|^3}.
$$
The following counting lemma for additive energy is a standard folklore consequence of Chang's lemma.
\begin{prop}\label{prop:counting-lemma}
Let $A\subseteq \mathbb F_p^n$ have density $\alpha$, and let $\varepsilon\in(0,1)$. Then there exists a subspace $W\le \mathbb F_p^n$ with $\operatorname{codim}(W)\le \frac{2}{\varepsilon^2}\log\frac{1}{\alpha}$
such that, if $g(x)=\mathop{\mathbb E}\limits_{y\in x+W}\mathbf 1_A(y)$,
then
$$
|\Lambda_4(\mathbf 1_A)-\Lambda_4(g)|\le \varepsilon^2\alpha^3.
$$
\end{prop}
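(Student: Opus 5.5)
We take $V$ from Theorem~\ref{theo:Chang-original} and set $W:=V^\perp$, so that $\operatorname{codim}(W)=\dim(V)\le \frac{2}{\varepsilon^2}\log\frac1\alpha$. The argument then expresses $\Lambda_4$ in Fourier space and compares the Fourier transforms of $\mathbf 1_A$ and $g$.

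First we compute $\widehat g$. Since $g=\mathbf 1_A*\mu_W$, where $\mu_W$ is the normalized indicator of $W$ (density $|W|^{-1}$ times $\mathbf 1_W$ in the appropriate normalization), we have $\widehat g(\xi)=\widehat{\mathbf 1_A}(\xi)\widehat{\mu_W}(\xi)$. Moreover $\widehat{\mu_W}=\mathbf 1_{W^\perp}=\mathbf 1_V$. Hence $\widehat g(\xi)=\widehat{\mathbf 1_A}(\xi)$ for $\xi\in V$ and $\widehat g(\xi)=0$ otherwise.

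Next we use the standard identity $\Lambda_4(f)=\sum_\xi |\widehat f(\xi)|^4$, which follows by expanding the constraint $x_1+x_2=x_3+x_4$ with characters. Combined with the previous step, this gives
\[
\Lambda_4(\mathbf 1_A)-\Lambda_4(g)=\sum_{\xi\notin V}|\widehat{\mathbf 1_A}(\xi)|^4 .
\]
This difference is nonnegative.

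Finally we bound the tail sum. Every $\xi\notin V$ lies outside $\operatorname{Spec}_\varepsilon(A)$, so $|\widehat{\mathbf 1_A}(\xi)|\le\varepsilon\alpha$. Therefore
\[
\sum_{\xi\notin V}|\widehat{\mathbf 1_A}(\xi)|^4\le \varepsilon^2\alpha^2\sum_{\xi}|\widehat{\mathbf 1_A}(\xi)|^2=\varepsilon^2\alpha^2\cdot\alpha=\varepsilon^2\alpha^3,
\]
where the middle equality is Parseval.

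There is no real obstacle here. The only care needed is in the convolution normalization, i.e. checking that $\widehat{\mu_W}=\mathbf 1_{V}$ exactly when $W=V^\perp$ (using $(V^\perp)^\perp=V$). This is a one-line character-sum computation: $\mathbb E_{w\in W}\overline{\chi_\xi(w)}$ equals $1$ if $\xi\in W^\perp$ and $0$ otherwise.
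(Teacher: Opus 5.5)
Your proof is correct and is the argument the paper has in mind. The paper only remarks that the proposition follows by combining Chang's lemma with the identity $\Lambda_4(f)=\sum_{\xi}|\widehat f(\xi)|^4$, and your steps fill this in: take $W=V^\perp$, note $\widehat g=\widehat{\mathbf 1_A}\,\mathbf 1_V$, and bound $\sum_{\xi\notin V}|\widehat{\mathbf 1_A}(\xi)|^4$ by $\varepsilon^2\alpha^2\sum_\xi|\widehat{\mathbf 1_A}(\xi)|^2=\varepsilon^2\alpha^3$ using Parseval.
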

The proof of this proposition immediately follows by combining Chang's lemma with the Fourier identity
$$
\Lambda_4(f)=\sum_{\xi\in\mathbb F_p^n} |\widehat f(\xi)|^4.
$$

Note that Proposition~\ref{prop:counting-lemma} is still a global statement: it shows that the total additive energy of $A$ is well approximated by that of its fibre-density model $g$, but it does not say how this additive structure is distributed among the fibres. In particular, global additive energy tells you how much additive structure there is, but not where that structure is located.

One would therefore like to understand whether the stronger fiberwise cancellation in Theorem~\ref{theo:main} allows one to localise this approximation. Since Theorem~\ref{theo:main} controls the oscillatory part of $\mathbf 1_A$ on average over the fibres, it is natural to ask whether one may insert bounded weights that are constant on each fibre and still approximate the resulting weighted count by the simpler fibre-density model $g$. The next proposition shows that this is indeed possible.

To formulate this precisely, we introduce the following multilinear form
$$
\Lambda_4(f_1,f_2,f_3,f_4)
=
\mathop{\mathbb E}\limits_{x_1+x_2=x_3+x_4}
f_1(x_1)f_2(x_2)\overline{f_3(x_3)f_4(x_4)}.
$$
By definition, we have  $\Lambda_4(f)=\Lambda_4(f,f,f,f)$ and the standard identity
$$
\Lambda_4(f_1,f_2,f_3,f_4)
=
\sum_{\xi\in\mathbb F_p^n}
\widehat f_1(\xi)\widehat f_2(\xi)\,
\overline{\widehat f_3(\xi)\widehat f_4(\xi)}.
$$
We can now state the localized counting lemma.
\begin{prop}[Localized counting]\label{prop:localized-counting}
Let $A\subseteq \mathbb F_p^n$ have density $\alpha$, and let $\varepsilon\in(0,1)$. Then there exists a subspace $W\le \mathbb F_p^n$ with $\operatorname{codim}(W)\le \frac{2}{\varepsilon^2}\log\frac{1}{\alpha}$ such that, if $g(x)=\mathop{\mathbb E}\limits_{y\in x+W}\mathbf 1_A(y)$, then for every choice of functions $q_1,q_2,q_3,q_4:\mathbb F_p^n\to\mathbb C$ that are constant on the cosets of $W$ and satisfy $|q_i|\le 1$, one has the uniform estimate
$$
\left|
\Lambda_4(q_1\mathbf 1_A,q_2\mathbf 1_A,q_3\mathbf 1_A,q_4\mathbf 1_A)
-
\Lambda_4(q_1g,q_2g,q_3g,q_4g)
\right|
\le \varepsilon^2\alpha^3.
$$
\end{prop}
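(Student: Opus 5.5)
The plan is to take $W:=V^\perp$, where $V$ is the subspace supplied by Theorem~\ref{theo:main}. Then $\operatorname{codim}(W)=\dim(V)\le \frac{2}{\varepsilon^2}\log\frac1\alpha$, and the cosets of $W$ are exactly the fibres $y+V^\perp$ appearing in \eqref{eq:refined-chang}. Write $f_i:=q_i\mathbf 1_A$ and $g_i:=q_ig$. Because $q_i$ is constant on cosets of $W$, we have $g_i(x)=\mathop{\mathbb E}_{y\in x+W} f_i(y)$. In other words, $g_i$ is the averaging projection of $f_i$ onto $W$-invariant functions.

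The first step is the standard Fourier description of this projection. Averaging over cosets of $W=V^\perp$ multiplies the Fourier transform by $\mathbf 1_V$, so
\[
\widehat{g_i}(\xi)=\widehat{f_i}(\xi)\mathbf 1_V(\xi).
\]
Plugging this into the identity $\Lambda_4(f_1,f_2,f_3,f_4)=\sum_\xi \widehat f_1\widehat f_2\overline{\widehat f_3\widehat f_4}$, the terms with $\xi\in V$ cancel exactly. This leaves
\[
\Lambda_4(f_1,\dots,f_4)-\Lambda_4(g_1,\dots,g_4)=\sum_{\xi\notin V}\widehat f_1(\xi)\widehat f_2(\xi)\overline{\widehat f_3(\xi)\widehat f_4(\xi)}.
\]

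The key step, and the only place where the strengthening matters, is a uniform bound $|\widehat{f_i}(\xi)|\le\varepsilon\alpha$ for every $\xi\notin V$. Write $q_i(y)$ for the constant value of $q_i$ on the coset $y$. Then
\[
\widehat{f_i}(\xi)=\mathop{\mathbb E}_{y\in\mathbb F_p^n/V^\perp} q_i(y)\,\mathop{\mathbb E}_{x\in y+V^\perp}\mathbf 1_A(x)\overline{\chi_\xi(x)}.
\]
Moving the absolute value inside the outer expectation and using $|q_i|\le 1$, this is at most the left-hand side of \eqref{eq:refined-chang}, hence at most $\varepsilon\alpha$. The classical Theorem~\ref{theo:chang-reformulated} would not suffice here, because the weights $q_i(y)$ can destroy the global cancellation between fibres. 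The cosetwise $\ell^1$ control of Theorem~\ref{theo:main} is exactly what is insensitive to such weights.

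To finish, bound $|\widehat f_3(\xi)\widehat f_4(\xi)|\le\varepsilon^2\alpha^2$ for $\xi\notin V$. The remaining sum $\sum_\xi|\widehat f_1(\xi)||\widehat f_2(\xi)|$ is handled by Cauchy--Schwarz and Parseval: it is at most $\|f_1\|_2\|f_2\|_2\le\alpha$, since $|f_i|^2\le\mathbf 1_A$. This gives the claimed bound $\varepsilon^2\alpha^3$. I do not expect a real obstacle. The only care needed is verifying the projection identity $\widehat{g_i}=\widehat{f_i}\mathbf 1_V$ and correctly pulling the fibre-constant weights out of the inner expectation.
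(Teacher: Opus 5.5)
Your proof is correct, but it takes a different route from the paper's.

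You stay global on $\mathbb F_p^n$ and use the dual reading of \eqref{eq:refined-chang}: the cosetwise $\ell^1$ quantity dominates $|\widehat{q\mathbf 1_A}(\xi)|$ for every fibre-constant $q$ with $|q|\le 1$. Once you have $\widehat{q_ig}=\widehat{q_i\mathbf 1_A}\,\mathbf 1_V$, the folklore proof of Proposition~\ref{prop:counting-lemma} goes through verbatim for the weighted functions.

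The paper instead localizes first. It writes $\Delta$ as an average over coset quadruples $z_1+z_2=z_3+z_4$ of the weights times the local discrepancy between the count on the fibres $z_1,\dots,z_4$ and $g(z_1)g(z_2)g(z_3)g(z_4)$. It Fourier-expands each local count over $\widehat W$, where the trivial character contributes exactly the product of the $g(z_i)$. It then discards the weights by the triangle inequality, leaving $\sum_{\gamma\neq 0}\Lambda_4(a_\gamma)$ on $\mathbb F_p^n/W$, with $a_\gamma(z)$ the modulus of the fibrewise Fourier coefficient. Finally it bounds $\Lambda_4(a_\gamma)\le(\mathbb E a_\gamma)^2\,\mathbb E a_\gamma^2$ by Fourier analysis on the quotient, with $\mathbb E a_\gamma\le\varepsilon\alpha$ from Theorem~\ref{theo:main} and $\sum_\gamma\mathbb E a_\gamma^2=\alpha$ from Parseval on $W$.

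Your version is shorter. It makes transparent that the strengthening amounts to Chang's lemma holding simultaneously for all fibre-weighted copies of $\mathbf 1_A$, which is exactly why the $\|\widehat q_i\|_{\ell^1}$ factors of the classical argument disappear.

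The paper's version, however, only ever uses $|q_1(z_1)q_2(z_2)\overline{q_3(z_3)q_4(z_4)}|\le 1$, so it in fact proves the stronger estimate
\[
\mathop{\mathbb E}\limits_{z_1+z_2=z_3+z_4}\bigl|L(z_1,z_2,z_3,z_4)-g(z_1)g(z_2)g(z_3)g(z_4)\bigr|\le\varepsilon^2\alpha^3,
\]
where $L$ denotes the local additive count on the fibres $z_1,\dots,z_4$. This gives uniformity over arbitrary bounded weights on coset quadruples, not just product weights. Your argument does not give this directly, since it needs each weight attached to a single variable in order to form $\widehat{q_i\mathbf 1_A}$.
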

The value of Proposition~\ref{prop:localized-counting} lies in the uniformity over the weights. Indeed, if one fixes a quadruple $q_1,q_2,q_3,q_4$ and repeats the Fourier proof of Proposition~\ref{prop:counting-lemma} using only the classical Chang's lemma, one obtains the estimate
$$
\left|
\Lambda_4(q_1\mathbf 1_A,q_2\mathbf 1_A,q_3\mathbf 1_A,q_4\mathbf 1_A)
-
\Lambda_4(q_1g,q_2g,q_3g,q_4g)
\right|
\le
\varepsilon^2\alpha^3
\min_{1\le i<j\le 4}
\|\widehat q_i\|_{\ell^1}\|\widehat q_j\|_{\ell^1}.
$$
Thus, the classical argument gives a bound for each fixed choice of weights, but with a constant depending on their Fourier $\ell^1$ norms. Proposition~\ref{prop:localized-counting} removes this dependence.

\subsection{Structure of the paper}

The remainder of the paper is organized as follows. In Section~2, we prove the strengthened Chang's lemma in $\mathbb F_p^n$.  In Section~3 we prove the
localized counting lemma for additive energy. Finally, in Section~4, we discuss the extension to
arbitrary finite abelian groups. 

\medskip

\textbf{Acknowledgments.}
We are grateful to our supervisors, Imre Leader and Timothy Gowers, for their support and guidance; the first author is supported by the CB European PhD Studentship funded by Trinity College, Cambridge, and the second author acknowledges support from the Isaac Newton Trust through a Trinity Cambridge Research Studentship.
We also thank Julia Wolf for stimulating discussions and for pointing out the reference \cite{Lee2017LargeSpectrum}.

\section{Proof of the strengthened Chang's lemma}
Before we turn to the proof of Theorem \ref{theo:main}, let us say a few words about the existing proofs of Chang’s lemma.
The original proof by Chang relies on Rudin's inequality \cite{Chang2002}. Later proofs proceed by proving the level-$1$ inequality on the Boolean cube via entropy, as in \cite{impagliazzo2014entropic}, and by extending this argument to $\mathbb{F}_p^n$, as in \cite{WigdersonAlmostPeriodicity}. A different approach was later developed by Lee \cite{Lee2017LargeSpectrum}, based on entropy maximization and generalized Riesz products. None of these arguments seems to adapt in a straightforward way to the present setting. 

Our proof is based on a natural iterative construction in which relative entropy appears only as a progress measure. At each step, if the current subspace fails to satisfy the conclusion of the theorem, we choose a frequency witnessing this failure and enlarge the subspace by adjoining it. A peculiarity of the proof is that we keep track throughout the iteration of the behaviour of characters on individual cosets. In this respect, the argument is inspired by the way boosting procedures are adapted to satisfy group-fairness constraints, particularly in the context of multicalibration \cite{HebertJohnsonKimReingoldRothblum2018}.

 For any non-empty subset $S \subseteq \mathbb{F}_p^n$, let $\mu_S$ denote the uniform probability measure on $S$, defined by $\mu_S(x)=\mathbf{1}_S(x)/|S|$ for all $x \in \mathbb{F}_p^n$. Let $A$ be the set appearing in the statement of Theorem~\ref{theo:main}, and let $\alpha=|A|/p^n$ be its density. We write $\mu=\mu_A$. For any subset $E\subseteq \mathbb{F}_p^n$, we write $\mu(E)=\sum_{x\in E}\mu(x)$. If $F\subseteq \mathbb{F}_p^n$ is an affine subspace and $\mu(F)>0$, we define
$$
\mathbb{E}_{x\sim\mu\mid F} f(x)
=
\frac{1}{\mu(F)}\sum_{x\in F}\mu(x)f(x),
$$
and, by convention, we interpret this quantity as $0$ when $\mu(F)=0$.

For each subspace $V\leq \mathbb{F}_p^n$ and each $y\in \mathbb{F}_p^n/V^\perp$, we write $F_{V,y}=y+V^\perp$ for the corresponding fiber. Then, for every subspace $V\leq \mathbb{F}_p^n$, every $\xi\in\mathbb{F}_p^n$, and every $y\in \mathbb{F}_p^n/V^\perp$, one has
$$
\mu(F_{V,y})\,\mathbb E_{x\sim\mu\mid F_{V,y}} \overline{\chi_\xi(x)}
=
\sum_{x\in F_{V,y}}\mu(x)\overline{\chi_\xi(x)}
=
\frac{1}{\alpha p^n}\sum_{x\in F_{V,y}}\mathbf{1}_A(x)\overline{\chi_\xi(x)}.
$$
Using this identity, we restate Theorem~\ref{theo:main} in a form that is more convenient for the proof.

\begin{thm}\label{theo:main-restated}
Let $A\subseteq \mathbb{F}_p^n$ be a set of density $\alpha>0$, and let $\varepsilon\in(0,1)$. Then there exists a subspace $V\leq \mathbb{F}_p^n$ with
$
\dim(V)\leq \frac{2}{\varepsilon^2}\log\frac{1}{\alpha}
$
such that
\begin{equation}\label{eq:main-restated}
\sum_{y\in \mathbb F_p^n/V^\perp}
\mu(F_{V,y})
\left|
\mathbb E_{x\sim\mu\mid F_{V,y}} \overline{\chi_\xi(x)}
\right|
\leq \varepsilon
\end{equation}
for every $\xi\notin V$.
\end{thm}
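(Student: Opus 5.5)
The plan is an energy-increment (here, entropy-decrement) iteration. Starting from $V_0=\{0\}$, we repeatedly enlarge the current subspace by one witnessing frequency, and we control the number of steps using the relative entropy of $\mu$ against a fiber-density model. For a subspace $V$, define the probability measure
$$
\nu_V(x)=\frac{\mu(F_{V,y})}{|F_{V,y}|}\qquad (x\in F_{V,y}),
$$
which is $\mu$ averaged over each fiber of $V^\perp$. The potential is
$$
\Phi(V)=D(\mu\,\|\,\nu_V)=\sum_x \mu(x)\log\frac{\mu(x)}{\nu_V(x)}\ge 0.
$$
For $V_0=\{0\}$ we have $V_0^\perp=\mathbb F_p^n$, so $\nu_{V_0}$ is uniform and $\Phi(V_0)=\log(1/\alpha)$.

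The iteration runs as follows. If the current $V$ satisfies \eqref{eq:main-restated} for every $\xi\notin V$, we stop. Otherwise we pick $\xi\notin V$ with
$$
\sum_y \mu(F_{V,y})\,\bigl|\mathbb E_{x\sim\mu\mid F_{V,y}}\overline{\chi_\xi(x)}\bigr|>\varepsilon,
$$
and set $V'=V+\langle\xi\rangle$. Then $\dim V'=\dim V+1$, and each fiber $F_{V,y}$ splits into exactly $p$ fibers of $V'^\perp$, indexed by the value $j=\langle \xi,x\rangle$ on $F_{V,y}$. This value takes all $p$ values because $\xi\notin V=(V^\perp)^\perp$, so $\xi$ is nonconstant on $V^\perp$. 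Since $\nu_{V'}$ refines $\nu_V$ (both are conditional expectations, and $\nu_{V'}$ agrees with $\mu$ on $V'$-fiber masses), the chain rule gives the Pythagorean identity
$$
\Phi(V)=\Phi(V')+D(\nu_{V'}\|\nu_V).
$$
Moreover,
$$
D(\nu_{V'}\|\nu_V)=\sum_y\mu(F_{V,y})\,D(q^{(y)}\|u_p),
$$
where $q^{(y)}$ is the distribution of $\langle\xi,x\rangle$ under $\mu\mid F_{V,y}$ and $u_p$ is uniform on $\mathbb F_p$. Each step therefore strictly decreases $\Phi$ by this amount.

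The key quantitative step, which I expect to be the only real obstacle, is bounding this decrement from below by the witnessing quantity with the correct constant. On each fiber,
$$
\mathbb E_{x\sim\mu\mid F_{V,y}}\overline{\chi_\xi(x)}=\sum_j \bigl(q^{(y)}_j-\tfrac1p\bigr)\omega^{-j},
$$
using $\sum_j\omega^{-j}=0$. Hence its modulus is at most $\|q^{(y)}-u_p\|_1=2\,\mathrm{TV}(q^{(y)},u_p)$. Pinsker's inequality (natural logarithms) then gives
$$
D(q^{(y)}\|u_p)\ge 2\,\mathrm{TV}^2\ge \tfrac12\bigl|\mathbb E_{x\sim\mu\mid F_{V,y}}\overline{\chi_\xi(x)}\bigr|^2.
$$
Summing over $y$ with weights $\mu(F_{V,y})$ and applying Jensen (Cauchy--Schwarz) to the convex map $t\mapsto t^2$ yields
$$
D(\nu_{V'}\|\nu_V)\ge \tfrac12\Bigl(\sum_y\mu(F_{V,y})\bigl|\mathbb E_{x\sim\mu\mid F_{V,y}}\overline{\chi_\xi(x)}\bigr|\Bigr)^2>\tfrac{\varepsilon^2}{2}.
$$
Fibers with $\mu(F_{V,y})=0$ contribute nothing on either side, consistent with the convention. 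This is the point where working fiberwise pays off: the decrement is naturally a $\mu$-weighted average of per-fiber divergences, so it is controlled by the cosetwise $\ell^1$ quantity and not just by $|\widehat{\mathbf 1_A}(\xi)|/\alpha$.

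Finally, since $\Phi$ starts at $\log(1/\alpha)$, stays nonnegative, and drops by more than $\varepsilon^2/2$ at each step, the process stops after at most $\frac{2}{\varepsilon^2}\log\frac1\alpha$ steps. The terminal subspace $V$ therefore has $\dim V\le \frac{2}{\varepsilon^2}\log\frac1\alpha$ and satisfies \eqref{eq:main-restated} for every $\xi\notin V$. By the identity preceding Theorem~\ref{theo:main-restated}, this is exactly Theorem~\ref{theo:main}.
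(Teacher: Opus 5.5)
Your proof is correct, but it takes a different route from the paper's.

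\textbf{The paper's route.} The paper runs a multiplicative-weights iteration. It carries a history-dependent measure $\nu_{i+1}=(1+\varepsilon d)\nu_i$, where $d(x)=\Re\bigl(\phi(y)\overline{\chi_\xi(x)}\bigr)$ is the phase-aligned character on each fibre. It checks that $d$ has mean zero under $\nu_i$, so that $\nu_{i+1}$ remains a probability measure. It then bounds the drop in $D(\mu\|\nu_i)$ from below using the chord inequality for $t\mapsto\log(1+\varepsilon t)$.

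\textbf{Your route.} You use the canonical fibre-density model $\nu_V$. By your Pythagorean identity, $\nu_V$ minimizes $D(\mu\|\nu)$ over all measures $\nu$ constant on cosets of $V^\perp$. So your potential depends only on $V$, and $\Phi(V_i)\le\Psi_i$ for the paper's $V_i,\nu_i$. The decrement is then the exact quantity $\sum_y\mu(F_{V,y})D(q^{(y)}\|u_p)$, and fibrewise Pinsker plus Jensen finish the argument.

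\textbf{What your route buys.} It yields more than the theorem. Pinsker controls the whole fibrewise total variation
\[
\sum_y\mu(F_{V,y})\|q^{(y)}-u_p\|_1=\|\nu_{V+\langle\xi\rangle}-\nu_V\|_1 ,
\]
and $h_V=p^n\nu_V$ when $h=\alpha^{-1}\mathbf 1_A$. So you can replace the stopping rule by this $\ell^1$ condition and let each step jump to any violating $V'\ge V$ with $\dim V'-\dim V\le r$. Using $D(\nu_{V'}\|\nu_V)\ge\frac12\|\nu_{V'}-\nu_V\|_1^2$, this proves the statement of the paper's Remark directly.

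\textbf{What the paper's route buys.} The paper's update uses no conditional-expectation structure, only a bounded test function with mean zero against the current model. That is why it is reused verbatim for Theorem~\ref{Chang-general}, where $\langle\Lambda\rangle$ is not a subgroup.

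Your bookkeeping, a weighted average of per-cell divergences followed by Jensen, is actually closer to the paper's proof of Theorem~\ref{thm:refined-chang-general}. For $p=2$ one has $D(q^{(y)}\|u_2)=\frac{1+t}{2}\log(1+t)+\frac{1-t}{2}\log(1-t)$ with $t=q^{(y)}_0-q^{(y)}_1$, which is exactly the convex function used there.
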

Before presenting the proof, let us finally recall that, given two probability measures $\nu_1$ and $\nu_2$ on $\mathbb{F}_p^n$, the Kullback--Leibler divergence of $\nu_1$ from $\nu_2$ is defined by
$$
D(\nu_1\|\nu_2):=
\sum_{x\in \mathbb{F}_p^n} \nu_1(x)\log\frac{\nu_1(x)}{\nu_2(x)},
$$
with the usual conventions that $0\log 0=0$ and that $D(\nu_1\|\nu_2)=+\infty$ if there exists $x\in \mathbb{F}_p^n$ such that $\nu_1(x)>0$ and $\nu_2(x)=0$.
\begin{proof}[Proof of Theorem \ref{theo:main-restated}]
We construct inductively a nested sequence of subspaces
$$
V_0 \leq V_1 \leq \cdots \leq V_T
$$
of $\mathbb{F}_p^n$, and define for every $0\leq i\leq T$ probability measures $\nu_i$ on $\mathbb{F}_p^n$, where each $\nu_i$ is constant on every coset of $V_i^\perp$. We let $V_0=\{0\}$ and $\nu_0=\mu_{\mathbb{F}_{p}^n}$ and assume that the subspace $V_i$ and the measure $\nu_i$ have already been defined. If for every $\xi\notin V_i$, the inequality \eqref{eq:main-restated} is satisfied, then the process stops. Otherwise, choose a vector $\xi\notin V_i$ witnessing the failure of the conclusion, that is,
$$
\sum_{y\in \mathbb F_p^n/V_i^\perp}
\mu(F_{V_i,y})
\left|
\mathop{\mathbb E}\limits_{x\sim\mu\mid F_{V_i,y}} \overline{\chi_\xi(x)}
\right|
>\varepsilon.
$$
For each $y\in \mathbb F_p^n/V_i^\perp$, we choose a phase factor $\phi(y)$ so as to align
${\mathbb E}_{x\sim\mu\mid F_{V_i,y}} \overline{\chi_\xi(x)}$ with the positive real axis. If this quantity is non-zero, we define $\phi(y)$ to be its complex conjugate normalized to have modulus $1$; otherwise we let $\phi(y)=1$. We use this phase function to define a real-valued test function $d:\mathbb{F}_p^n\to[-1,1]$ by
$$
d(x)=\Re\bigl(\phi(y)\,\overline{\chi_\xi(x)}\bigr)
\qquad\text{whenever }x\in F_{V_i,y}.
$$
With this definition, the expectation of $d$ under $\mu$ is
$$
\mathop{\mathbb E}\limits_{x\sim\mu}d(x)
=
\sum_{y\in \mathbb F_p^n/V_i^\perp}
\mu(F_{V_i,y})\,
\Re\left(
\phi(y)\,
\mathop{\mathbb E}\limits_{x\sim\mu\mid F_{V_i,y}} \overline{\chi_\xi(x)}
\right)
=
\sum_{y\in \mathbb F_p^n/V_i^\perp}
\mu(F_{V_i,y})
\left|
\mathop{\mathbb E}\limits_{x\sim\mu\mid F_{V_i,y}} \overline{\chi_\xi(x)}
\right|
>
\varepsilon.
$$

Set $V_{i+1}=\operatorname{span}(V_i,\xi)$. Observe that, since $\chi_\xi$ is constant on cosets of $\xi^\perp$ and $\phi(y)$ depends only on the coset $F_{V_i,y}$, the function $d$ is constant on cosets of $V_i^\perp\cap \xi^\perp = V_{i+1}^\perp$. Hence $d$ is constant on every coset of $V_{i+1}^\perp$. We now define
$$
\nu_{i+1}(x)=(1+\varepsilon d(x))\nu_i(x).
$$
By construction, $\nu_{i+1}$ is constant on every coset of $V_{i+1}^\perp$. Moreover, since $d(x)\in[-1,1]$ and $\varepsilon\in(0,1)$, the factor $1+\varepsilon d(x)$ is strictly positive for every $x\in\mathbb F_p^n$.
We next check that $\nu_{i+1}$ is still a probability measure. Since $\xi\notin V_i$, the character $\chi_\xi$ is non-trivial on $V_i^\perp$, and therefore its average on every coset of $V_i^\perp$ vanishes. As $\nu_i$ is constant on these cosets, it follows that
$$
\mathbb E_{x\sim\nu_i} d(x)
=
\sum_{y\in \mathbb F_p^n/V_i^\perp}
\nu_i(F_{V_i,y})\,
\Re\left(
\phi(y)\,
\mathop{\mathbb E}\limits_{x\in F_{V_i,y}} \overline{\chi_\xi(x)}
\right)
=
0.
$$
Hence $\sum_x \nu_{i+1}(x)=\mathbb E_{x\sim\nu_i}(1+\varepsilon d(x))=1$, so $\nu_{i+1}$ is a probability measure.
Defining the potential $\Psi_i=D(\mu\|\nu_i)$, we then have
$$
\Psi_{i+1}
=
\sum_{x\in\mathbb F_p^n}
\mu(x)\log\frac{\mu(x)}{\nu_{i+1}(x)}
=
\Psi_i-\mathbb E_{x\sim\mu}\log(1+\varepsilon d(x)).
$$
We now lower bound the last expectation. Since $t\mapsto\log(1+\varepsilon t)$ is concave on $[-1,1]$, its graph lies above the chord joining the endpoints. Thus, for every $t\in[-1,1]$,
$$
\log(1+\varepsilon t)
\geq
\frac{1+t}{2}\log(1+\varepsilon)
+
\frac{1-t}{2}\log(1-\varepsilon).
$$
Averaging with respect to $\mu$ gives
$$
\mathbb E_{x\sim\mu}\log(1+\varepsilon d(x))
\geq
\frac{1+\mathbb E_{x\sim\mu}d(x)}{2}\log(1+\varepsilon)
+
\frac{1-\mathbb E_{x\sim\mu}d(x)}{2}\log(1-\varepsilon).
$$
The right-hand side is increasing as a function of $\mathbb E_{x\sim\mu}d(x)$. Since $\mathbb E_{x\sim\mu}d(x)>\varepsilon$, we obtain
$$
\mathbb E_{x\sim\mu}\log(1+\varepsilon d(x))
\geq
\frac{1+\varepsilon}{2}\log(1+\varepsilon)
+
\frac{1-\varepsilon}{2}\log(1-\varepsilon).
$$
Finally, the function
$$
h(t)=\frac{1+t}{2}\log(1+t)+\frac{1-t}{2}\log(1-t)
$$
satisfies $h(0)=0$ and $h'(t)=\frac12\log\frac{1+t}{1-t}\geq t$ for every $t\in[0,1)$. Therefore $h(\varepsilon)\geq \varepsilon^2/2$, and hence
$$
\Psi_{i+1}\leq \Psi_i-\frac{\varepsilon^2}{2}.
$$
In the end, since $\nu_0=\mu_{\mathbb F_p^n}$ and $\mu=\mu_A$, we have $\Psi_0=D(\mu_A\|\mu_{\mathbb F_p^n})=\log(1/\alpha)$. On the other hand, $\Psi_i\geq0$ for every $i$. Therefore the process must terminate after at most
$$
T\leq \frac{2}{\varepsilon^2}\log\frac{1}{\alpha}
$$
steps, and identifying $V$ with $V_T$ concludes the proof.
\end{proof}
\begin{rem}
The proof above can be viewed as part of a more general iterative argument.
If one inspects it carefully, the specific use of a Fourier character is not
essential. What matters is that, whenever the current quotient does not yet
capture the relevant structure, one can find a bounded test function which has
mean zero on each fibre of the current quotient, is constant on the fibres of a
linear refinement of controlled rank, and has positive average against the
target density. Once such a function is available, the proof scheme works exactly in the same way.

For each subspace $V\le \mathbb F_p^n$ and each function
$h:\mathbb F_p^n\to[0,\infty)$, define
$$
h_V(x)=\mathop{\mathbb E}\limits_{y\in x+V^\perp} h(y).
$$
Thus $h_V$ is constant on the cosets of $V^\perp$. The proof of Theorem \ref{theo:main} can be adapted to
show that, if
$$
\mathop{\mathbb E}\limits_{x\in\mathbb F_p^n} h(x)=1
\qquad\text{and}\qquad
\mathop{\mathbb E}\limits_{x\in\mathbb F_p^n}\bigl[h(x)\log h(x)\bigr]<\infty,
$$
then, for every $\varepsilon\in(0,1)$ and every integer $r\ge 1$, there exists
a subspace $V\le \mathbb F_p^n$ with
$$
\dim(V)\le \frac{2r}{\varepsilon^2}\,
\mathop{\mathbb E}\limits_{x\in\mathbb F_p^n}\bigl[h(x)\log h(x)\bigr]
$$
such that, for every subspace $V'\ge V$ satisfying
$\dim(V')-\dim(V)\le r$, one has
$$
\mathop{\mathbb E}\limits_{x\in\mathbb F_p^n}|h_{V'}(x)-h_V(x)|\le \varepsilon.
$$

One can also see that Theorem~\ref{theo:main-restated} can be recovered from
this statement by choosing $h=\alpha^{-1}\mathbf 1_A$ and $r=1$, and then, for
each $\xi\notin V$, by taking $V'=\operatorname{span}(V,\xi)$. In this case,
$h_{V'}-h_V$ captures the additional oscillation revealed by refining the
quotient in the direction of $\xi$. Indeed, $\chi_\xi$ is constant on the
cosets of $V'^\perp$ and has mean zero on each coset of $V^\perp$, so the
fibrewise quantity in \eqref{eq:main-restated} is controlled by
$\mathop{\mathbb E}\limits_{x\in\mathbb F_p^n}|h_{V'}(x)-h_V(x)|$.
\end{rem}
\section{Proof of the localized counting lemma}

We now turn our attention to the proof of Proposition~\ref{prop:localized-counting}. We identify functions on $\mathbb F_p^n/W$ with functions on $\mathbb F_p^n$ that are constant on the cosets of $W$. When working on the subgroup $W\le \mathbb F_p^n$, we write $\widehat W$ for its dual group, that is, the group of characters $\gamma:W\to\mathbb C^\times$. Using the standard pairing on $\mathbb F_p^n$, we identify $\widehat W$ with $\mathbb F_p^n/W^\perp$ through the map $\xi\mapsto \chi_\xi|_W$.
\begin{proof}
By Theorem~\ref{theo:main}, there exists a subspace $V\le \mathbb F_p^n$ with
$
\dim(V)\le \frac{2}{\varepsilon^2}\log\frac{1}{\alpha}
$
such that, setting $W=V^\perp$, one has
$$
\mathop{\mathbb E}\limits_{z\in \mathbb F_p^n/W}
\left|
\mathop{\mathbb E}\limits_{x\in z+W}\mathbf 1_A(x)\overline{\chi_\xi(x)}
\right|
\le \varepsilon\alpha
\qquad\text{for every }\xi\notin W^\perp.
$$
In particular,
$
\operatorname{codim}(W)=\dim(V)\le \frac{2}{\varepsilon^2}\log\frac{1}{\alpha}.
$
We also regard $g$ as a function on $\mathbb F_p^n/W$, so that $g(z)$ denotes the common value of $g$ on the coset $z$. Set
$$
\Delta=\Lambda_4(q_1\mathbf 1_A,q_2\mathbf 1_A,q_3\mathbf 1_A,q_4\mathbf 1_A)-\Lambda_4(q_1g,q_2g,q_3g,q_4g).
$$
Since the functions $q_i$ and $g$ are constant on the cosets of $W$, averaging first over the cosets containing the variables gives the following expression for $\Delta$
\begin{small}
$$
\mathop{\mathbb E}\limits_{z_1+z_2=z_3+z_4}
q_1(z_1)q_2(z_2)\overline{q_3(z_3)q_4(z_4)}
\left(
\mathop{\mathbb E}\limits_{\substack{x_i\in z_i\\ x_1+x_2=x_3+x_4}}
\mathbf 1_A(x_1)\mathbf 1_A(x_2)\mathbf 1_A(x_3)\mathbf 1_A(x_4)
-
g(z_1)g(z_2)g(z_3)g(z_4)
\right).
$$
\end{small}
Now, fix cosets $z_1,z_2,z_3,z_4$ with
$
z_1+z_2=z_3+z_4.
$
Choose points $x_i\in z_i$ for $i=1,2,3,4$ such that $x_1+x_2=x_3+x_4$, and define $h_i:W\to\mathbb C$ by $h_i(w)=\mathbf 1_A(x_i+w)$.
Then
$$
\mathop{\mathbb E}\limits_{\substack{x_i\in z_i\\ x_1+x_2=x_3+x_4}}
\mathbf 1_A(x_1)\mathbf 1_A(x_2)\mathbf 1_A(x_3)\mathbf 1_A(x_4)
=
\mathop{\mathbb E}\limits_{w_1+w_2=w_3+w_4}
h_1(w_1)h_2(w_2)h_3(w_3)h_4(w_4).
$$
Hence, by the Fourier identity on $W$,
$$
\mathop{\mathbb E}\limits_{\substack{x_i\in z_i\\ x_1+x_2=x_3+x_4}}
\mathbf 1_A(x_1)\mathbf 1_A(x_2)\mathbf 1_A(x_3)\mathbf 1_A(x_4)
=
\sum_{\gamma\in\widehat W}
\widehat h_1(\gamma)\widehat h_2(\gamma)\,
\overline{\widehat h_3(\gamma)\widehat h_4(\gamma)}.
$$
For each character $\gamma\in\widehat W$ and each coset $z\in\mathbb F_p^n/W$, define
$$
a_\gamma(z)
=
\left|
\mathop{\mathbb E}\limits_{x\in z}\mathbf 1_A(x)\overline{\chi_\xi(x)}
\right|,
$$
where $\xi\in\mathbb F_p^n$ is any element such that $\chi_\xi|_W=\gamma$. Note that this is a well-defined quantity. If $\chi_\xi|_W=\gamma$, then for each $i$ we have
$$
\mathop{\mathbb E}\limits_{x\in z_i}\mathbf 1_A(x)\overline{\chi_\xi(x)}
=
\overline{\chi_\xi(x_i)}\,
\mathop{\mathbb E}\limits_{w\in W}\mathbf 1_A(x_i+w)\overline{\gamma(w)}
=
\overline{\chi_\xi(x_i)}\,\widehat h_i(\gamma),
$$
and therefore
$
|\widehat h_i(\gamma)|=a_\gamma(z_i).
$
Moreover, when $\gamma$ is the trivial character, one has
$
a_\gamma(z_i)=g(z_i).
$
Substituting this into the previous Fourier expansion, and using $|q_i|\le 1$, we obtain
$$
|\Delta|
\le
\sum_{\substack{\gamma\in\widehat W\\ \gamma\text{ non-trivial}}}
\mathop{\mathbb E}\limits_{z_1+z_2=z_3+z_4}
a_\gamma(z_1)a_\gamma(z_2)a_\gamma(z_3)a_\gamma(z_4).
$$
By the Fourier identity on $\mathbb F_p^n/W$,
$$
\mathop{\mathbb E}\limits_{z_1+z_2=z_3+z_4}
a_\gamma(z_1)a_\gamma(z_2)a_\gamma(z_3)a_\gamma(z_4)
=
\sum_{\eta\in\widehat{\mathbb F_p^n/W}} |\widehat{a_\gamma}(\eta)|^4.
$$
Hence,
$$
\mathop{\mathbb E}\limits_{z_1+z_2=z_3+z_4}
a_\gamma(z_1)a_\gamma(z_2)a_\gamma(z_3)a_\gamma(z_4)
\le
\left(\sup_{\eta\in\widehat{\mathbb F_p^n/W}} |\widehat{a_\gamma}(\eta)|^2\right)
\sum_{\eta\in\widehat{\mathbb F_p^n/W}} |\widehat{a_\gamma}(\eta)|^2.
$$
Since $a_\gamma\ge 0$, we have
$
|\widehat{a_\gamma}(\eta)|\le
\mathop{\mathbb E}\limits_{z\in\mathbb F_p^n/W} a_\gamma(z),
$
so Parseval gives
$$
\mathop{\mathbb E}\limits_{z_1+z_2=z_3+z_4}
a_\gamma(z_1)a_\gamma(z_2)a_\gamma(z_3)a_\gamma(z_4)
\le
\left(
\mathop{\mathbb E}\limits_{z\in\mathbb F_p^n/W} a_\gamma(z)
\right)^2
\mathop{\mathbb E}\limits_{z\in\mathbb F_p^n/W} a_\gamma(z)^2.
$$
Therefore,
$$
|\Delta|
\le
\sum_{\substack{\gamma\in\widehat W\\ \gamma\text{ non-trivial}}}
\left(
\mathop{\mathbb E}\limits_{z\in\mathbb F_p^n/W} a_\gamma(z)
\right)^2
\mathop{\mathbb E}\limits_{z\in\mathbb F_p^n/W} a_\gamma(z)^2.
$$
Now, fix a non-trivial character $\gamma\in\widehat W$, and choose $\xi\in\mathbb F_p^n$ such that $\chi_\xi|_W=\gamma$. Since $\gamma$ is non-trivial, one has $\xi\notin W^\perp$. Hence, by the choice of $W$,
$$
\mathop{\mathbb E}\limits_{z\in\mathbb F_p^n/W} a_\gamma(z)
=
\mathop{\mathbb E}\limits_{z\in\mathbb F_p^n/W}
\left|
\mathop{\mathbb E}\limits_{x\in z}\mathbf 1_A(x)\overline{\chi_\xi(x)}
\right|
\le
\varepsilon\alpha.
$$
Finally, fix $z\in\mathbb F_p^n/W$, choose any $x_0\in z$, and define $h:W\to\mathbb C$ by $h(w)=\mathbf 1_A(x_0+w)$. Then Parseval on $W$ gives
$$
\sum_{\gamma\in\widehat W} |\widehat h(\gamma)|^2
=
\mathop{\mathbb E}\limits_{w\in W}|h(w)|^2
=
\mathop{\mathbb E}\limits_{x\in z}\mathbf 1_A(x)
=
g(z).
$$
By the definition of $a_\gamma(z)$, this means
$
\sum_{\gamma\in\widehat W} a_\gamma(z)^2=g(z).
$
Averaging over $z\in\mathbb F_p^n/W$, we get
$$
\sum_{\gamma\in\widehat W}
\mathop{\mathbb E}\limits_{z\in\mathbb F_p^n/W} a_\gamma(z)^2
=
\mathop{\mathbb E}\limits_{z\in\mathbb F_p^n/W} g(z)
=
\alpha.
$$
Therefore,
$$
|\Delta|
\le
\varepsilon^2\alpha^2
\sum_{\substack{\gamma\in\widehat W\\ \gamma\text{ non-trivial}}}
\mathop{\mathbb E}\limits_{z\in\mathbb F_p^n/W} a_\gamma(z)^2
\le
\varepsilon^2\alpha^3.
$$
This proves the proposition.
\end{proof}

\section{Strengthened Chang's lemma for finite abelian groups}

We now examine how our strengthened version of Chang's lemma (Theorem \ref{theo:main}) can be extended to apply to finite abelian groups. Before doing so, we show how the ideas in the proof of Theorem~\ref{theo:main} lead to a short and self-contained proof of the classical Chang's lemma for finite abelian groups. This also serves to show how the iterative procedure adapts to finite abelian groups. 

Let $G$ be a finite abelian group, written additively, and let $\widehat G$ denote its dual group (also written additively). We assume that $0\in\widehat G$ denotes the trivial character. For any non-empty subset $S\subseteq G$, let $\mu_S$ denote the uniform probability measure on $S$. In general finite abelian groups one no longer has the linear-algebraic structure
available in $\mathbb F_p^n$. The right analogue is provided by dissociated sets
and their $\{-1,0,1\}$-spans. 

Given a finite set $\Lambda\subseteq \widehat G$, we write
$$
\langle \Lambda\rangle
=
\left\{
\sum_{\lambda\in\Lambda}\eta_\lambda\lambda
\;:\;
\eta_\lambda\in\{-1,0,1\}
\right\}
$$
for the $\{-1,0,1\}$-span of $\Lambda$.

We say that a finite set $\Lambda\subseteq \widehat G$ is dissociated if the only
relation of the form
$$
\sum_{\lambda\in\Lambda}\eta_\lambda\lambda=0,
\qquad
\eta_\lambda\in\{-1,0,1\},
$$
is the trivial one in which all coefficients $\eta_\lambda$ vanish.

For a function $f:G\to\mathbb C$ and a character $\xi\in\widehat G$, we define
$$
\widehat f(\xi)
=
\mathop{\mathbb E}\limits_{x\in G} f(x)\overline{\xi(x)}.
$$

Let $A\subseteq G$ be the set appearing in the statement of Chang's lemma, let
$\alpha$ be its density, and write $\mu=\mu_A$.
With this notation in place, Chang's lemma takes the following form for general abelian groups.

\begin{thm}[Chang's lemma for finite abelian groups]\label{Chang-general}
Let $A\subseteq G$ be a set of density $\alpha>0$, and let
$\varepsilon\in(0,1)$. Then there exists a dissociated set
$\Lambda\subseteq\widehat G$ with
$$
|\Lambda|\le \frac{2}{\varepsilon^2}\log\frac{1}{\alpha}
$$
such that, for every $\xi\notin\langle\Lambda\rangle$,
\begin{equation}\label{eqchang-general}
\left|
\mathop{\mathbb E}\limits_{x\sim\mu}\overline{\xi(x)}
\right|
\le \varepsilon.
\end{equation}
\end{thm}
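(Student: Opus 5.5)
We run the same entropy-decrement iteration as in the proof of Theorem~\ref{theo:main-restated}, with subspaces replaced by dissociated sets and coset structure replaced by products of shifted characters. We build $\Lambda_0=\emptyset\subseteq\Lambda_1\subseteq\cdots$ together with probability measures $\nu_i$ on $G$, starting from $\nu_0=\mu_G$.

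At stage $i$, if every $\xi\notin\langle\Lambda_i\rangle$ satisfies \eqref{eqchang-general}, we stop. Otherwise we pick a witness $\xi\notin\langle\Lambda_i\rangle$ with $|\mathbb E_{x\sim\mu}\overline{\xi(x)}|>\varepsilon$ and set $\Lambda_{i+1}=\Lambda_i\cup\{\xi\}$. This set is again dissociated. Indeed, a nontrivial relation must involve $\xi$ with coefficient $\pm1$, since otherwise it would be a nontrivial relation inside $\Lambda_i$. Such a relation would express $\xi$ as an element of $\langle\Lambda_i\rangle$, a contradiction. (The $\{-1,0,1\}$-span is symmetric, so the sign causes no trouble.)

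Next we choose a phase $\phi$ with $\phi\,\mathbb E_\mu\overline{\xi}=|\mathbb E_\mu\overline\xi|$, put $d=\Re(\phi\overline\xi)\in[-1,1]$, and note that $\mathbb E_\mu d>\varepsilon$. We then take
\[
\nu_{i+1}=\prod_{\lambda\in\Lambda_{i+1}}(1+\varepsilon d_\lambda)\,\mu_G,
\]
a generalized Riesz product in which each $d_\lambda$ is the test function built at the step where $\lambda$ was added. Equivalently, $\nu_{i+1}=(1+\varepsilon d)\nu_i$.

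The key point, which replaces ``$\chi_\xi$ has mean zero on cosets of $V_i^\perp$'', is that $\sum_x\nu_{i+1}(x)=1$. Writing $d_\lambda=\tfrac12(\phi_\lambda\overline\lambda+\overline{\phi_\lambda}\lambda)$ and expanding the product gives a sum of terms of the form $c\cdot\sum_\lambda\eta_\lambda\lambda$ with $\eta_\lambda\in\{-1,0,1\}$. Only the term with all $\eta_\lambda=0$ survives averaging over $G$, because dissociativity says every other $\{-1,0,1\}$-combination is a nontrivial character. That surviving term equals $1$. Positivity holds since each factor is at least $1-\varepsilon>0$.

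It follows that $\mathbb E_{\nu_i}d=0$. This is the main point to verify carefully; a direct expansion works, or one can apply the same orthogonality to $\nu_{i+1}$ and $\nu_i$. The potential then drops exactly as in the proof of Theorem~\ref{theo:main-restated}. With $\Psi_i=D(\mu\|\nu_i)$,
\[
\Psi_{i+1}=\Psi_i-\mathbb E_\mu\log(1+\varepsilon d),
\]
and the chord bound for the concave function $t\mapsto\log(1+\varepsilon t)$ together with $h(\varepsilon)\ge\varepsilon^2/2$ gives $\Psi_{i+1}\le\Psi_i-\varepsilon^2/2$. Since $\Psi_0=\log(1/\alpha)$ and $\Psi_i\ge0$, the process stops after at most $2\varepsilon^{-2}\log(1/\alpha)$ steps. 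The final $\Lambda$ is dissociated, has that size, and satisfies \eqref{eqchang-general} for every $\xi\notin\langle\Lambda\rangle$.

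The one genuinely new ingredient is the normalization step. The expected obstacle is handling characters of order $2$, where $\lambda=-\lambda$; dissociativity then already excludes such relations only if the expansion is organized correctly, so that is where I would double-check the bookkeeping.
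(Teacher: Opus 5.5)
Your proof is correct and follows the paper's argument: the same greedy choice of $\xi\notin\langle\Lambda_i\rangle$, the same update $\nu_{i+1}=(1+\varepsilon d)\nu_i$, and the same chord bound, which makes $D(\mu\|\nu_i)$ drop by at least $\varepsilon^2/2$ per step. The one difference is in the normalization: the paper carries the invariant $\operatorname{supp}\widehat{f_i}\subseteq\langle\Lambda_i\rangle$ for the density $f_i$ of $\nu_i$, so $\mathbb E_{\nu_i}d=\Re(\phi\,\widehat{f_i}(\xi))=0$ follows directly from $\xi\notin\langle\Lambda_i\rangle$, whereas you expand the full product and use dissociativity of $\Lambda_{i+1}$. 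Your order-$2$ worry is unfounded, because dissociativity is a condition on coefficient vectors, so every nonzero $(\eta_\lambda)$ in the expansion gives a nontrivial character even when $\lambda=-\lambda$.
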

\begin{proof}
We construct inductively a nested sequence of dissociated sets
$$
\Lambda_0\subseteq \Lambda_1\subseteq \cdots \subseteq \Lambda_T
$$
in $\widehat G$, and define for every $0\le i\le T$ a probability measure
$\nu_i$ on $G$, absolutely continuous with respect to $\mu_G$, whose density
has Fourier transform supported in $\langle \Lambda_i\rangle$. We let
$\Lambda_0=\varnothing$ and $\nu_0=\mu_G$, and assume that $\Lambda_i$ and
$\nu_i$ have already been defined.

If, for every $\xi\notin \langle\Lambda_i\rangle$, the inequality
\eqref{eqchang-general} is satisfied, then the process stops. Otherwise,
choose a character $\xi\notin\langle\Lambda_i\rangle$ witnessing the failure of
the conclusion, that is,
$$
\left|
\mathop{\mathbb E}\limits_{x\sim\mu}\overline{\xi(x)}
\right|
>
\varepsilon.
$$
We choose a phase factor $\theta=\theta(\xi)$ so as to align
$\mathop{\mathbb E}\limits_{x\sim\mu}\overline{\xi(x)}$ with the positive real
axis. If this quantity is non-zero, we define $\theta$ to be its complex
conjugate normalized to have modulus $1$; otherwise we let $\theta=1$. We use
this phase factor to define a real-valued test function $d:G\to[-1,1]$ by
$$
d(x)=\Re\bigl(\theta\,\overline{\xi(x)}\bigr).
$$
With this definition,
$$
\mathop{\mathbb E}\limits_{x\sim\mu}d(x)
=
\Re\left(
\theta\,
\mathop{\mathbb E}\limits_{x\sim\mu}\overline{\xi(x)}
\right)
=
\left|
\mathop{\mathbb E}\limits_{x\sim\mu}\overline{\xi(x)}
\right|
>
\varepsilon.
$$
Set $\Lambda_{i+1}=\Lambda_i\cup\{\xi\}$. Since
$\xi\notin\langle\Lambda_i\rangle$, the set $\Lambda_{i+1}$ is again
dissociated. Moreover,
$$
1+\varepsilon d(x)
=
1+\frac{\varepsilon}{2}\theta\,\overline{\xi(x)}
+\frac{\varepsilon}{2}\overline{\theta}\,\xi(x),
$$
so the Fourier transform of $x\mapsto 1+\varepsilon d(x)$ is supported on
$\{0,\xi,-\xi\}$. Writing $\nu_i=f_i\mu_G$, where
$f_i:G\to[0,\infty)$ is the density of $\nu_i$ with respect to $\mu_G$, and
recalling that $\widehat f_i$ is supported in $\langle\Lambda_i\rangle$, it
follows that the Fourier transform of $x\mapsto f_i(x)(1+\varepsilon d(x))$
is supported in
$$
\langle\Lambda_i\rangle+\{0,\xi,-\xi\}
\subseteq
\langle\Lambda_{i+1}\rangle.
$$
We now define
$$
\nu_{i+1}(x)=(1+\varepsilon d(x))\nu_i(x).
$$
Then $\nu_{i+1}$ is non-negative, and its density with respect to $\mu_G$ has
Fourier transform supported in $\langle\Lambda_{i+1}\rangle$. We next check that $\nu_{i+1}$ is still a probability measure. Since
$\nu_i=f_i\mu_G$, we have
$$
\mathop{\mathbb E}\limits_{x\sim\nu_i}d(x)
=
\Re\left(
\theta\,
\mathop{\mathbb E}\limits_{x\sim\nu_i}\overline{\xi(x)}
\right)
=
\Re\left(
\theta\,
\mathop{\mathbb E}\limits_{x\sim\mu_G}f_i(x)\overline{\xi(x)}
\right)
=
\Re\bigl(\theta\,\widehat f_i(\xi)\bigr).
$$
Since $\widehat f_i$ is supported in $\langle\Lambda_i\rangle$ and
$\xi\notin\langle\Lambda_i\rangle$, we have $\widehat f_i(\xi)=0$. Hence
$
\mathop{\mathbb E}\limits_{x\sim\nu_i}d(x)=0.
$
Therefore
$$
\sum_{x\in G}\nu_{i+1}(x)
=
\mathop{\mathbb E}\limits_{x\sim\nu_i}(1+\varepsilon d(x))
=
1+\varepsilon\mathop{\mathbb E}\limits_{x\sim\nu_i}d(x)
=
1,
$$
so $\nu_{i+1}$ is a probability measure. Define the potential $\Psi_i=D(\mu\|\nu_i)$. Since
$$
\log\frac{\mu(x)}{\nu_{i+1}(x)}
=
\log\frac{\mu(x)}{\nu_i(x)}
-
\log(1+\varepsilon d(x)),
$$
averaging with respect to $\mu$ gives
$
\Psi_{i+1}
=
\Psi_i-
\mathop{\mathbb E}\limits_{x\sim\mu}\log(1+\varepsilon d(x)).
$
We now lower bound the last expectation. Since $t\mapsto \log(1+\varepsilon t)$
is concave on $[-1,1]$, its graph lies above the chord joining the endpoints.
Thus, for every $t\in[-1,1]$,
$$
\log(1+\varepsilon t)
\ge
\frac{1+t}{2}\log(1+\varepsilon)
+
\frac{1-t}{2}\log(1-\varepsilon).
$$
Averaging with respect to $\mu$ gives
$$
\mathop{\mathbb E}\limits_{x\sim\mu}\log(1+\varepsilon d(x))
\ge
\frac{1+\mathop{\mathbb E}\limits_{x\sim\mu}d(x)}{2}\log(1+\varepsilon)
+
\frac{1-\mathop{\mathbb E}\limits_{x\sim\mu}d(x)}{2}\log(1-\varepsilon).
$$
The right-hand side is increasing as a function of
$\mathop{\mathbb E}\limits_{x\sim\mu}d(x)$. Since
$\mathop{\mathbb E}\limits_{x\sim\mu}d(x)>\varepsilon$, we obtain
$$
\mathop{\mathbb E}\limits_{x\sim\mu}\log(1+\varepsilon d(x))
\ge
\frac{1+\varepsilon}{2}\log(1+\varepsilon)
+
\frac{1-\varepsilon}{2}\log(1-\varepsilon).
$$
Finally, the function
$$
h(t)=
\frac{1+t}{2}\log(1+t)
+
\frac{1-t}{2}\log(1-t)
$$
satisfies $h(0)=0$ and $h'(t)\ge t
$
for every $t\in[0,1)$. Therefore $h(\varepsilon)\ge \varepsilon^2/2$, and hence
$$
\Psi_{i+1}
\le
\Psi_i-\frac{\varepsilon^2}{2}.
$$
In the end, since $\nu_0=\mu_G$ and $\mu=\mu_A$, we have
$$
\Psi_0=D(\mu_A\|\mu_G)=\log\frac{1}{\alpha}.
$$
On the other hand, $\Psi_i\ge 0$ for every $i$. Therefore the process must
terminate after at most
$$
T\le \frac{2}{\varepsilon^2}\log\frac{1}{\alpha}
$$
steps, and identifying $\Lambda$ with $\Lambda_T$ concludes the proof.
\end{proof}
Before passing to the refined case, let us isolate the form of the finite-field
statement which has a meaningful analogue for arbitrary finite abelian groups.
If $\mu_A$ denotes the uniform probability measure on $A$, then the conclusion
of Theorem~\ref{theo:main} can be rewritten as follows. For every
$\xi\notin V$,
$$
\mathop{\mathbb E}\limits_{y\in \mathbb F_p^n/V^\perp}
\left|
|\mathbb F_p^n/V^\perp|\,
\mathop{\mathbb E}\limits_{x\sim\mu_A}
\mathbf 1_{y+V^\perp}(x)\,\overline{\chi_\xi(x)}
\right|
\le \varepsilon.
$$
For each coset $y+V^\perp$, the function
$$
x\mapsto |\mathbb F_p^n/V^\perp|\,\mathbf 1_{y+V^\perp}(x)
$$
is non-negative, has global average $1$, and has Fourier support contained in
$V$. Moreover these functions average to $1$ pointwise as $y$ ranges over
$\mathbb F_p^n/V^\perp$. Thus the refined finite-field statement may be read
as follows: after removing a low-dimensional exceptional space $V$, every
remaining character has small average against $\mu_A$ even after localizing
$\mu_A$ by the elements of a positive Fourier-supported decomposition of
unity.

This formulation is the one that survives in general finite abelian groups. The role of the
coset densities is played by a family of non-negative functions
$g_\sigma$ with mean $1$, Fourier support contained in the exceptional span,
and pointwise average equal to $1$. The refined statement below says that,
outside this span, one still has cancellation after localization by this whole
family of weights.

For each integer $r\ge 0$, we write
$
\Sigma_r=\{\pm1\}^r,
$
with the convention that $\Sigma_0=\{\varnothing\}$. We can now state the
result.
\begin{thm}[Refined Chang's lemma for finite abelian groups]
\label{thm:refined-chang-general}
Let $G$ be a finite abelian group, let $A\subseteq G$ be a set of density
$\alpha>0$, and let $\varepsilon\in(0,1)$. Then there exist an integer
$r\ge 0$, a dissociated set $\Delta\subseteq \widehat G$ with $|\Delta|=r$,
and functions $g_\sigma:G\to[0,\infty)$, indexed by $\sigma\in\Sigma_r$,
with
$$
r\le \frac{2}{\varepsilon^2}\log\frac1\alpha.
$$
These functions may be chosen so that, for every $\sigma\in\Sigma_r$,
$$
\mathbb E_{x\in G} g_\sigma(x)=1
\qquad\text{and}\qquad
\operatorname{supp}\widehat{g_\sigma}\subseteq \langle\Delta\rangle,
$$
while, for every $x\in G$,
$
\mathbb E_{\sigma\in\Sigma_r} g_\sigma(x)=1.
$
Moreover, for every $\eta\notin\langle\Delta\rangle$, one has
$$
\mathbb E_{\sigma\in\Sigma_r}
\left|
\mathbb E_{x\sim\mu_A} g_\sigma(x)\,\overline{\eta(x)}
\right|
\le \varepsilon.
$$
\end{thm}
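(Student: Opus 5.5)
We run the iterative scheme of Theorem~\ref{theo:main-restated}, with the cosets of $V^\perp$ replaced by a family of localizing weights built from Riesz-type products. The plan is to maintain, at stage $i$, a dissociated set $\Delta_i=\{\lambda_1,\dots,\lambda_i\}$ and weights
\[
g_\sigma(x)=\prod_{j=1}^{i}\bigl(1+\sigma_j\,\Re(\theta_j\overline{\lambda_j(x)})\bigr),\qquad \sigma\in\Sigma_i,
\]
with unimodular phases $\theta_j$. Averaging over $\sigma$ factorizes coordinatewise and kills every linear term, so $\mathbb E_{\sigma}g_\sigma\equiv1$. Each $g_\sigma$ is non-negative. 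Expanding the product, $\widehat{g_\sigma}$ is supported on the $\{-1,0,1\}$-combinations of the $\lambda_j$, which is $\langle\Delta_i\rangle$. Dissociativity of $\Delta_i$ means that $0$ arises only from the empty combination, so $\mathbb E_x g_\sigma=1$. This gives all the structural requirements of Theorem~\ref{thm:refined-chang-general}. We start from $\Delta_0=\varnothing$ and $g_\varnothing\equiv1$.

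The iteration runs as follows. Suppose that for some $\eta\notin\langle\Delta_i\rangle$ we have $\mathbb E_\sigma\bigl|\mathbb E_{x\sim\mu}g_\sigma(x)\overline{\eta(x)}\bigr|>\varepsilon$. For each $\sigma$, choose a phase $\phi(\sigma)$ aligning $\mathbb E_{\mu}g_\sigma\overline{\eta}$ with the positive real axis. The new test function is the localized analogue of the function $d$ in the proof of Theorem~\ref{theo:main-restated}, namely $d(x)=\mathbb E_\sigma g_\sigma(x)\Re(\phi(\sigma)\overline{\eta(x)})$. It takes values in $[-1,1]$ because the $g_\sigma$ are non-negative and average to $1$, and by construction $\mathbb E_\mu d>\varepsilon$. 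We then set $\nu_{i+1}=(1+\varepsilon d)\nu_i$, with $\nu_0=\mu_G$, and use the potential $\Psi_i=D(\mu\|\nu_i)$. The chord estimate and the bound $h(\varepsilon)\ge\varepsilon^2/2$, copied verbatim from the proof of Theorem~\ref{theo:main-restated}, give $\Psi_{i+1}\le\Psi_i-\varepsilon^2/2$. Since $\Psi_0=\log(1/\alpha)$ and $\Psi_i\ge0$, the process stops after at most $2\varepsilon^{-2}\log(1/\alpha)$ steps.

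Two points need checking. The first is that $\nu_{i+1}$ is a probability measure, which requires $\mathbb E_{\nu_i}d=0$. For this I would keep the invariant that the density $f_i$ of $\nu_i$ has Fourier support in $\langle\Delta_i\rangle$. Then $\mathbb E_{\nu_i}g_\sigma\overline\eta=\sum_{\beta}\widehat{f_i}(\beta)\,\overline{\widehat{g_\sigma}}(\eta-\beta)$ vanishes if $\eta\notin\langle\Delta_i\rangle-\langle\Delta_i\rangle$. This is stronger than $\eta\notin\langle\Delta_i\rangle$, which is a genuine gap to resolve, perhaps by tracking the $\nu_i$ more cleverly. The second point is the support bookkeeping. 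The function $d$ has Fourier support in $\langle\Delta_i\rangle\pm\eta$, not on $\{0,\pm\eta\}$, so simply adjoining $\lambda_{i+1}=\eta$ together with a global phase $\theta_{i+1}$ does not reproduce the product form of the weights. I would therefore have to choose the new weights $g_{(\sigma,\pm)}$ as $g_\sigma(1\pm\Re(\phi(\sigma)\overline\eta))$, letting the phase depend on $\sigma$. These are still non-negative, average to $1$ pointwise, and have mean $1$ provided $\eta\notin\langle\Delta_i\rangle$. Their Fourier support, however, lies in $\langle\Delta_i\rangle\pm\eta\subseteq\langle\Delta_{i+1}\rangle$ only with $\sigma$-dependent phases, and that is acceptable since only supports matter. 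So I would define the weights recursively this way rather than by a single product formula.

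The main obstacle is reconciling the dissociativity condition with mean-zero-ness when supports only accumulate inside $\langle\Delta\rangle$, since sums of two elements of $\langle\Delta\rangle$ need not lie in $\langle\Delta\rangle$. My first attempt would be to make the potential argument not use $\nu_i$ globally. Instead I would run it fibrewise, with a separate measure attached to each $\sigma$ as an analogue of $\nu_i$ restricted to a coset, so that the only mean-zero condition needed is $\mathbb E_{x}g_\sigma(x)\overline{\eta(x)}=0$. This holds exactly when $\eta\notin\langle\Delta_i\rangle$, because $\widehat{g_\sigma}$ is supported there.
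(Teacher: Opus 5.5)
\textbf{Gap: the bound on $r$ is never proved.} Your structural bookkeeping is sound. The recursive split you settle on, $g_{(\sigma,\pm)}=g_\sigma\bigl(1\pm\Re(\phi(\sigma)\overline{\eta})\bigr)$ with $\sigma$-dependent phases, is exactly the right construction. It gives non-negativity, $\mathbb E_\sigma g_\sigma\equiv1$, mean $1$ (because $\pm\eta\notin\langle\Delta_i\rangle$), and Fourier support in $\langle\Delta_i\rangle+\{0,\pm\eta\}\subseteq\langle\Delta_{i+1}\rangle$; dissociativity of $\Delta_i\cup\{\eta\}$ is immediate. What is missing is the bound $r\le 2\varepsilon^{-2}\log(1/\alpha)$, which is the real content of the theorem.

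The only progress measure you write down is $D(\mu\|\nu_i)$ with $\nu_{i+1}=(1+\varepsilon d)\nu_i$, and it breaks down for the reason you identify yourself. The condition $\mathbb E_{\nu_i}d=0$ would need $\widehat{f_ig_\sigma}(\eta)=0$, but $\operatorname{supp}\widehat{f_ig_\sigma}$ is only known to lie in $\langle\Delta_i\rangle+\langle\Delta_i\rangle$. Even the invariant $\operatorname{supp}\widehat{f_i}\subseteq\langle\Delta_i\rangle$ is not preserved, since $\widehat d$ lives on $\langle\Delta_i\rangle\pm\eta$. Renormalising $\nu_{i+1}$ costs a term $\log(1+\varepsilon\,\mathbb E_{\nu_i}d)$, which can be of order $\varepsilon$ and swamp the $\varepsilon^2/2$ gain. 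Your final paragraph names a direction, but it does not specify the potential, its upper bound, its per-step change, or how gains on individual fibres are combined into something controlled by the uniform average over $\sigma$ in the failure hypothesis.

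\textbf{The missing idea.} Put the potential on the index set $\Sigma_i$ rather than on $G$, and let it \emph{increase}.
\begin{itemize}
\item Set $m_\sigma=\mathbb E_{x\sim\mu_A}g_\sigma(x)$ and $p_i(\sigma)=2^{-i}m_\sigma$, a probability measure since $\mathbb E_\sigma g_\sigma\equiv1$. Let $\Phi_i=D(p_i\|u_i)$ with $u_i$ uniform on $\Sigma_i$.
\item The kernel $x\mapsto(2^{-i}g_\sigma(x))_\sigma$ pushes $\mu_A$ to $p_i$, and, because $\mathbb E_xg_\sigma=1$, it pushes $\mu_G$ to $u_i$. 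The log-sum inequality then gives $0\le\Phi_i\le D(\mu_A\|\mu_G)=\log(1/\alpha)$.
\item Let $c_\sigma=\bigl|\mathbb E_{x\sim\mu_A}g_\sigma(x)\overline{\eta(x)}\bigr|\le m_\sigma$. With your split, $p_{i+1}(\sigma,\pm)=\tfrac12p_i(\sigma)(1\pm c_\sigma/m_\sigma)$. Hence $\Phi_{i+1}-\Phi_i=\mathbb E_{\sigma\sim p_i}h(c_\sigma/m_\sigma)$, where $h(t)=\frac{1+t}{2}\log(1+t)+\frac{1-t}{2}\log(1-t)$.
\item Using $h(t)\ge t^2/2$ and Jensen, this increment is at least $\frac12\bigl(\mathbb E_{\sigma\sim p_i}c_\sigma/m_\sigma\bigr)^2=\frac12\bigl(\mathbb E_{\sigma\in\Sigma_i}c_\sigma\bigr)^2>\varepsilon^2/2$. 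The weighting $p_i\propto m_\sigma$ is precisely what turns the fibrewise ratios into the uniform average in the failure hypothesis.
\item Since $\Phi_0=0$, at most $2\varepsilon^{-2}\log(1/\alpha)$ non-terminal steps can occur.
\end{itemize}
The only mean-zero input needed is $\mathbb E_xg_\sigma(x)\overline{\eta(x)}=0$, as you hoped.

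Your fibrewise intuition is in fact equivalent to this. By the chain rule for relative entropy,
$\sum_\sigma p_i(\sigma)\,D\bigl(g_\sigma\mu_A/m_\sigma\,\big\|\,g_\sigma\mu_G\bigr)=\log(1/\alpha)-\Phi_i$, so the decreasing fibrewise potential you were looking for is the complement of $\Phi_i$. But identifying it and proving the $\varepsilon^2/2$ step is the proof, and that part is absent from your proposal.
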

\begin{proof}
We construct inductively dissociated sets
$$
\Delta_0\subseteq \Delta_1\subseteq \cdots
$$
and, for each $i\ge 0$, functions $g_\sigma:G\to[0,\infty)$, indexed by
$\sigma\in\Sigma_i$, such that
$$
\mathbb E_{x\in G}g_\sigma(x)=1,
\qquad
\operatorname{supp}\widehat{g_\sigma}\subseteq \langle\Delta_i\rangle
$$
for every $\sigma\in\Sigma_i$, and
$\mathbb E_{\sigma\in\Sigma_i}g_\sigma(x)=1$ for every $x\in G$. We start with
$\Delta_0=\varnothing$ and $g_\varnothing=1$.

Assume that the construction has reached step $i$. Set
$m_\sigma=\mathbb E_{x\sim\mu_A}g_\sigma(x)$ and define a probability measure
$p_i$ on $\Sigma_i$ by $p_i(\sigma)=2^{-i}m_\sigma$. This is indeed a
probability measure, since
$$
\sum_{\sigma\in\Sigma_i}p_i(\sigma)
=
\mathbb E_{x\sim\mu_A}\mathbb E_{\sigma\in\Sigma_i}g_\sigma(x)
=
1.
$$
Let $u_i$ be the uniform probability measure on $\Sigma_i$, and set
$\Phi_i=D(p_i\|u_i)$. Since, for each fixed $x\in G$, the numbers
$2^{-i}g_\sigma(x)$ form a probability distribution on $\Sigma_i$, and since
$p_i(\sigma)=\sum_x\mu_A(x)2^{-i}g_\sigma(x)$ while
$u_i(\sigma)=\sum_x\mu_G(x)2^{-i}g_\sigma(x)$, the log-sum inequality gives
$$
0\le \Phi_i\le D(\mu_A\|\mu_G)=\log\frac1\alpha.
$$
If, for every $\eta\notin\langle\Delta_i\rangle$, one has
$$
\mathbb E_{\sigma\in\Sigma_i}
\left|
\mathbb E_{x\sim\mu_A}g_\sigma(x)\overline{\eta(x)}
\right|
\le \varepsilon,
$$
then the construction stops. Otherwise choose $\eta\notin\langle\Delta_i\rangle$
such that
$$
\mathbb E_{\sigma\in\Sigma_i}
\left|
\mathbb E_{x\sim\mu_A}g_\sigma(x)\overline{\eta(x)}
\right|
>
\varepsilon.
$$
For each $\sigma\in\Sigma_i$, choose $|\theta_\sigma|=1$ so that
$$
\Re\left(
\theta_\sigma
\mathbb E_{x\sim\mu_A}g_\sigma(x)\overline{\eta(x)}
\right)
=
\left|
\mathbb E_{x\sim\mu_A}g_\sigma(x)\overline{\eta(x)}
\right|,
$$
and put
$
d_\sigma(x)=\Re(\theta_\sigma\overline{\eta(x)}).
$
Equivalently,
$
d_\sigma(x)
=
\frac12\theta_\sigma\overline{\eta(x)}
+
\frac12\overline{\theta_\sigma}\eta(x).
$
Then $|d_\sigma|\le 1$, and, since
$\operatorname{supp}\widehat{g_\sigma}\subseteq\langle\Delta_i\rangle$ while
$\eta,-\eta\notin\langle\Delta_i\rangle$, one has
$$
\mathbb E_{x\in G}g_\sigma(x)d_\sigma(x)
=
\frac12\theta_\sigma\widehat{g_\sigma}(\eta)
+
\frac12\overline{\theta_\sigma}\widehat{g_\sigma}(-\eta)
=
0.
$$
We split each $g_\sigma$ into
$$
g_{\sigma,+}=(1+d_\sigma)g_\sigma,
\qquad
g_{\sigma,-}=(1-d_\sigma)g_\sigma.
$$
These functions are non-negative and have mean $1$. Moreover,
$(g_{\sigma,+}(x)+g_{\sigma,-}(x))/2=g_\sigma(x)$, so, after identifying
$\Sigma_{i+1}$ with $\Sigma_i\times\{\pm1\}$, we still have
$\mathbb E_{\tau\in\Sigma_{i+1}}g_\tau(x)=1$ for every $x\in G$.

Set $\Delta_{i+1}=\Delta_i\cup\{\eta\}$. Since
$\eta\notin\langle\Delta_i\rangle$, the set $\Delta_{i+1}$ is still
dissociated. Also, the identity above shows that the Fourier transform of
$1\pm d_\sigma$ is supported on $\{0,\eta,-\eta\}$. Therefore
$$
\operatorname{supp}\widehat{g_{\sigma,\pm}}
\subseteq
\operatorname{supp}\widehat{g_\sigma}+\{0,\eta,-\eta\}
\subseteq
\langle\Delta_{i+1}\rangle.
$$
Thus the inductive properties are preserved.

It remains to check that the entropy potential increases by a definite amount
at every non-terminal step. Put
$
c_\sigma=
\left|
\mathbb E_{x\sim\mu_A}g_\sigma(x)\overline{\eta(x)}
\right|.
$
Then $0\le c_\sigma\le m_\sigma$, and, by the choice of $\theta_\sigma$,
$\mathbb E_{x\sim\mu_A}g_\sigma(x)d_\sigma(x)=c_\sigma$. Hence
$\mathbb E_{x\sim\mu_A}g_{\sigma,\pm}(x)=m_\sigma\pm c_\sigma$, and therefore
$$
p_{i+1}(\sigma,\pm)
=
\frac12 p_i(\sigma)
\left(1\pm\frac{c_\sigma}{m_\sigma}\right),
$$
with the convention that $c_\sigma/m_\sigma=0$ when $m_\sigma=0$. A direct
calculation gives
$$
\Phi_{i+1}-\Phi_i
=
\mathbb E_{\sigma\sim p_i}
\left[
\frac{1+c_\sigma/m_\sigma}{2}
\log\left(1+\frac{c_\sigma}{m_\sigma}\right)
+
\frac{1-c_\sigma/m_\sigma}{2}
\log\left(1-\frac{c_\sigma}{m_\sigma}\right)
\right].
$$
The function
$$
t\mapsto
\frac{1+t}{2}\log(1+t)
+
\frac{1-t}{2}\log(1-t)
$$
is convex, and at least $t^2/2$ on $[0,1]$. Jensen's
inequality therefore gives
$$
\Phi_{i+1}-\Phi_i
\ge
\frac12
\left(
\mathbb E_{\sigma\sim p_i}\frac{c_\sigma}{m_\sigma}
\right)^2.
$$
Since $p_i(\sigma)=2^{-i}m_\sigma$, we have
$\mathbb E_{\sigma\sim p_i}c_\sigma/m_\sigma
=
\mathbb E_{\sigma\in\Sigma_i}c_\sigma>\varepsilon$. Thus every non-terminal
step gives $\Phi_{i+1}-\Phi_i>\varepsilon^2/2$.

Since $0\le \Phi_i\le \log(1/\alpha)$ at every stage, the construction must
stop after at most
$
\frac{2}{\varepsilon^2}\log\frac1\alpha
$
steps. If $T$ is the first terminal step, set $r=T$ and
$\Delta=\Delta_T$. The terminal condition gives
$$
\mathbb E_{\sigma\in\Sigma_r}
\left|
\mathbb E_{x\sim\mu_A}g_\sigma(x)\overline{\eta(x)}
\right|
\le \varepsilon
\qquad\text{for every }\eta\notin\langle\Delta\rangle,
$$
and all the remaining properties of the functions $g_\sigma$ were preserved
throughout the induction.
\end{proof}
Note that Theorem~\ref{thm:refined-chang-general} recovers Chang's lemma in
the form of Theorem~\ref{Chang-general}. Indeed, for every
$\eta\notin\langle\Delta\rangle$,
$$
\left|\mathbb E_{x\sim\mu_A}\overline{\eta(x)}\right|
=
\left|
\mathbb E_{\sigma\in\Sigma_r}
\mathbb E_{x\sim\mu_A}g_\sigma(x)\overline{\eta(x)}
\right|
\le
\mathbb E_{\sigma\in\Sigma_r}
\left|
\mathbb E_{x\sim\mu_A}g_\sigma(x)\overline{\eta(x)}
\right|
\le \varepsilon,
$$
because $\mathbb E_{\sigma\in\Sigma_r}g_\sigma(x)=1$ for every $x\in G$.

The formulation of Theorem~\ref{thm:refined-chang-general} is less ``geometric''
than the finite-field statement. Indeed, in $\mathbb F_p^n$, the functions appearing in
the refined theorem are simply the normalized indicators of the cosets of
$V^\perp$; in a general finite abelian group, the weights $g_\sigma$ do not necessarily admit such a transparent geometric interpretation. The most natural candidates to replace cosets in this general setting—most notably Bohr sets—fail to remain stable under the local refinements required here, at least if one aims to retain the sharp Chang bound $2\varepsilon^{-2}\log(1/\alpha)$.

Another related issue is that, in its current formulation, the result does not appear to admit an analogue of Proposition~\ref{prop:localized-counting}. 

\printbibliography

@article{Chang2002,
  author  = {Mei-Chu Chang},
  title   = {A polynomial bound in {Freiman's} theorem},
  journal = {Duke Mathematical Journal},
  volume  = {113},
  number  = {3},
  pages   = {399--419},
  year    = {2002},
  doi     = {10.1215/S0012-7094-02-11331-3}
}

@article{sanders2007littlewood,
  author  = {Tom Sanders},
  title   = {The {Littlewood--Gowers} problem},
  journal = {Journal d'Analyse Math{\'e}matique},
  volume  = {101},
  number  = {1},
  pages   = {123--162},
  year    = {2007},
  doi     = {10.1007/s11854-007-0005-1}
}

@article{sanders2011roth,
  author  = {Tom Sanders},
  title   = {On {Roth's} theorem on progressions},
  journal = {Annals of Mathematics},
  volume  = {174},
  number  = {1},
  pages   = {619--636},
  year    = {2011},
  doi     = {10.4007/annals.2011.174.1.20}
}

@inproceedings{tsang2013fourier,
  title={Fourier sparsity, spectral norm, and the log-rank conjecture},
  author={Tsang, Hing Yin and Wong, Chung Hoi and Xie, Ning and Zhang, Shengyu},
  booktitle={2013 IEEE 54th Annual Symposium on Foundations of Computer Science},
  pages={658--667},
  year={2013},
  organization={IEEE}
}

@article{helfgott2015growth,
  title={Growth in groups: ideas and perspectives},
  author={Helfgott, Harald},
  journal={Bulletin of the American Mathematical Society},
  volume={52},
  number={3},
  pages={357--413},
  year={2015}
}

@article{bloom2016quantitative,
  title={A quantitative improvement for Roth's theorem on arithmetic progressions},
  author={Bloom, Thomas F},
  journal={Journal of the London Mathematical Society},
  volume={93},
  number={3},
  pages={643--663},
  year={2016},
  publisher={Oxford University Press}
}

@article{bloom2020breaking,
  title={Breaking the logarithmic barrier in Roth's theorem on arithmetic progressions},
  author={Bloom, Thomas F and Sisask, Olof},
  journal={arXiv preprint arXiv:2007.03528},
  year={2020}
}

@inproceedings{asadi2022worst,
  title={Worst-case to average-case reductions via additive combinatorics},
  author={Asadi, Vahid R and Golovnev, Alexander and Gur, Tom and Shinkar, Igor},
  booktitle={Proceedings of the 54th Annual ACM SIGACT Symposium on Theory of Computing},
  pages={1566--1574},
  year={2022}
}

@inproceedings{lee2019fourier,
  title={Fourier Bounds and Pseudorandom Generators for Product Tests},
  author={Lee, Chin Ho},
  booktitle={34th Computational Complexity Conference (CCC 2019)},
  pages={7--1},
  year={2019},
  organization={Schloss Dagstuhl--Leibniz-Zentrum f{\"u}r Informatik}
}

@article{green2008quantitative,
  author  = {Ben Green and Tom Sanders},
  title   = {A quantitative version of the idempotent theorem in harmonic analysis},
  journal = {Annals of Mathematics},
  volume  = {168},
  number  = {3},
  pages   = {1025--1054},
  year    = {2008},
  doi     = {10.4007/annals.2008.168.1025}
}

@article{green2008boolean,
  author  = {Ben Green and Tom Sanders},
  title   = {Boolean functions with small spectral norm},
  journal = {Geometric and Functional Analysis},
  volume  = {18},
  number  = {1},
  pages   = {144--162},
  year    = {2008},
  doi     = {10.1007/s00039-008-0654-y}
}

@article{Breuillard2012,
  author  = {Emmanuel Breuillard and Ben Green and Terence Tao},
  title   = {The structure of approximate groups},
  journal = {Publications Math{\'e}matiques de l'IH{\'E}S},
  volume  = {116},
  pages   = {115--221},
  year    = {2012},
  doi     = {10.1007/s10240-012-0043-9}
}

@article{bloom2022new,
  author  = {Thomas F. Bloom and James Maynard},
  title   = {A new upper bound for sets with no square differences},
  journal = {Compositio Mathematica},
  volume  = {158},
  number  = {8},
  pages   = {1777--1798},
  year    = {2022},
  doi     = {10.1112/S0010437X22007679}
}

@article{chan2016approximate,
  author  = {Siu On Chan and James R. Lee and Prasad Raghavendra and David Steurer},
  title   = {Approximate constraint satisfaction requires large {LP} relaxations},
  journal = {Journal of the ACM},
  volume  = {63},
  number  = {4},
  pages   = {34:1--34:22},
  year    = {2016},
  doi     = {10.1145/2811255}
}

@article{Green2003,
  author  = {Ben Green},
  title   = {Some constructions in the inverse spectral theory of cyclic groups},
  journal = {Combinatorics, Probability and Computing},
  volume  = {12},
  number  = {2},
  pages   = {127--138},
  year    = {2003},
  doi     = {10.1017/S0963548302005436}
}

@article{impagliazzo2014entropic,
  author  = {Russell Impagliazzo and Cristopher Moore and Alexander Russell},
  title   = {An entropic proof of {Chang's} inequality},
  journal = {SIAM Journal on Discrete Mathematics},
  volume  = {28},
  number  = {1},
  pages   = {173--176},
  year    = {2014},
  doi     = {10.1137/120877982}
}

@misc{WigdersonAlmostPeriodicity,
  author   = {Yuval Wigderson},
  title    = {An Exposition of Almost Periodicity},
  year     = {2023},
  note     = {Expository notes, unpublished},
  url      = {https://ywigderson.math.ethz.ch/math/static/AlmostPeriodicity.pdf},
  urldate  = {2026-03-15}
}

@article{Lee2017LargeSpectrum,
  title     = {Covering the large spectrum and generalized {R}iesz products},
  author    = {Lee, James R.},
  journal   = {SIAM Journal on Discrete Mathematics},
  volume    = {31},
  number    = {1},
  pages     = {562--572},
  year      = {2017},
  publisher = {Society for Industrial and Applied Mathematics},
  doi       = {10.1137/15M1048604}
}

@inproceedings{HebertJohnsonKimReingoldRothblum2018,
  author    = {Ursula H{\'e}bert-Johnson and Michael Kim and Omer Reingold and Guy Rothblum},
  title     = {Multicalibration: Calibration for the (Computationally-Identifiable) Masses},
  booktitle = {Proceedings of the 35th International Conference on Machine Learning},
  series    = {Proceedings of Machine Learning Research},
  volume    = {80},
  pages     = {1939--1948},
  publisher = {PMLR},
  year      = {2018}
}
\end{document}